
\documentclass[11pt,letterpaper]{article}

\usepackage[cmyk]{xcolor}


\setlength{\textwidth}{15.9cm}
\setlength{\textheight}{22cm}
\setlength{\hoffset}{-1.7cm}
\setlength{\voffset}{-2.3cm}
\setlength{\parskip}{1.1mm}

\bibliographystyle{plain}

\usepackage[utf8]{inputenc}
\usepackage[vcentermath]{youngtab}
\usepackage{algorithm}
\usepackage{algpseudocode}
\usepackage[section]{placeins}
    \usepackage{amsmath,amsthm,amssymb,xcolor,tikz,tipa,tgtermes,relsize,appendix,ifthen,xstring,subcaption,array,float,ytableau,pdflscape,setspace,listings}

\usetikzlibrary{decorations.pathmorphing}
\usetikzlibrary{snakes}

\usepackage{hyperref}

\usetikzlibrary{decorations.markings,cd}
\usetikzlibrary{positioning}
\usetikzlibrary{decorations.pathreplacing}

\newtheorem{theorem}{Theorem}

\newtheorem{prop}[theorem]{Proposition}
\newtheorem{proposition}[theorem]{Proposition}

\newtheorem*{conj*}{Conjecture}
\theoremstyle{definition}
\newtheorem{definition}[theorem]{Definition}

\newtheorem{remark}[theorem]{Remark}

\numberwithin{theorem}{section}
\numberwithin{equation}{section}





\newcommand{\abt}[1]{\mathcal{A}_{#1}}


\newcounter{x}
\newcounter{y}
\newcounter{z}

\newcommand\xaxis{210}
\newcommand\yaxis{-30}
\newcommand\zaxis{90}

\newcommand\topsidey[3]{
  \fill[fill=yellow!40, draw=black,shift={(\xaxis:#1)},shift={(\yaxis:#2)},
  shift={(\zaxis:#3)}] (0,0) -- (30:1) -- (0,1) --(150:1)--(0,0);
}

\newcommand\leftsider[3]{
  \fill[fill=red!40, draw=black,shift={(\xaxis:#1)},shift={(\yaxis:#2)},
  shift={(\zaxis:#3)}] (0,0) -- (0,-1) -- (210:1) --(150:1)--(0,0);
}

\newcommand\rightsideb[3]{
  \fill[fill=blue!40, draw=black,shift={(\xaxis:#1)},shift={(\yaxis:#2)},
  shift={(\zaxis:#3)}] (0,0) -- (30:1) -- (-30:1) --(0,-1)--(0,0);
}

\newcommand\topside[3]{
  \fill[fill=white, draw=black,shift={(\xaxis:#1)},shift={(\yaxis:#2)},
  shift={(\zaxis:#3)}] (0,0) -- (30:1) -- (0,1) --(150:1)--(0,0);
}

\newcommand\leftside[3]{
  \fill[fill=gray!10, draw=black,shift={(\xaxis:#1)},shift={(\yaxis:#2)},
  shift={(\zaxis:#3)}] (0,0) -- (0,-1) -- (210:1) --(150:1)--(0,0);
}

\newcommand\rightside[3]{
  \fill[fill=gray!20, draw=black,shift={(\xaxis:#1)},shift={(\yaxis:#2)},
  shift={(\zaxis:#3)}] (0,0) -- (30:1) -- (-30:1) --(0,-1)--(0,0);
}

\newcommand\topsidex[3]{
  \fill[fill=gray!30, draw=black,shift={(\xaxis:#1)},shift={(\yaxis:#2)},
  shift={(\zaxis:#3)}] (0,0) -- (30:1) -- (0,1) --(150:1)--(0,0);
}

\newcommand\leftsidex[3]{
  \fill[fill=gray!40, draw=black,shift={(\xaxis:#1)},shift={(\yaxis:#2)},
  shift={(\zaxis:#3)}] (0,0) -- (0,-1) -- (210:1) --(150:1)--(0,0);
}

\newcommand\rightsidex[3]{
  \fill[fill=gray!50, draw=black,shift={(\xaxis:#1)},shift={(\yaxis:#2)},
  shift={(\zaxis:#3)}] (0,0) -- (30:1) -- (-30:1) --(0,-1)--(0,0);
}

\newcommand\cube[3]{
  \topsidey{#1}{#2}{#3} \leftsider{#1}{#2}{#3} \rightsideb{#1}{#2}{#3}
}


\newcommand\cubex[3]{
  \topside{#1}{#2}{#3} \leftside{#1}{#2}{#3} \rightside{#1}{#2}{#3}
}

\newcommand\planepartition[1]{
 \setcounter{x}{-1}
  \foreach \a in {#1} {
    \addtocounter{x}{1}
    \setcounter{y}{-1}
    \foreach \b in \a {
      \addtocounter{y}{1}
      \setcounter{z}{-1}
      \foreach \c in {0,...,\b} {
        \addtocounter{z}{1}
      \ifthenelse{\c=0}{\setcounter{z}{-1},\addtocounter{y}{0}}{
        \cube{\value{x}}{\value{y}}{\value{z}}}
      }
    }
  }
}

\newcommand\planepartitionx[1]{
 \setcounter{x}{-1}
  \foreach \a in {#1} {
    \addtocounter{x}{1}
    \setcounter{y}{-1}
    \foreach \b in \a {
      \addtocounter{y}{1}
      \setcounter{z}{-1}
      \foreach \c in {0,...,\b} {
        \addtocounter{z}{1}
      \ifthenelse{\c=0}{\setcounter{z}{-1},\addtocounter{y}{0}}{
        \cubex{\value{x}}{\value{y}}{\value{z}}}
      }
    }
  }
}

\newcommand\upstep[4]{
  \fill[ultra thick,  draw=#4,shift={(\xaxis:#1)},shift={(\yaxis:#2)},
  shift={(\zaxis:#3)}] (0,0) -- (30:1);
}
\newcommand\flatstep[4]{
  \fill[ultra thick,  draw=#4,shift={(\xaxis:#1)},shift={(\yaxis:#2)},
  shift={(\zaxis:#3)}] (0,0) -- (150:1);
}

\newcommand\backstep[4]{
  \fill[ultra thick,  draw=#4,shift={(\xaxis:#1)},shift={(\yaxis:#2)},
  shift={(\zaxis:#3)}] (0,0) -- (270:1);
}

\newcommand\startdot[4]{
  \draw[thick, black, fill=white,shift={(\xaxis:#1)},shift={(\yaxis:#2)},
  shift={(\zaxis:#3)}] circle (5pt);
}

\newcommand{\bull}{\raise0.5pt \hbox{$\bullet$}}

\definecolor{codegreen}{rgb}{0,0.6,0}
\definecolor{codegray}{rgb}{0.5,0.5,0.5}
\definecolor{codepurple}{rgb}{0.58,0,0.82}
\definecolor{backcolour}{rgb}{0.95,0.95,0.92}

\lstdefinestyle{mystyle}{
    backgroundcolor=\color{backcolour},   
    commentstyle=\color{codegreen},
    keywordstyle=\color{magenta},
    numberstyle=\tiny\color{codegray},
    stringstyle=\color{codepurple},
    basicstyle=\ttfamily\footnotesize,
    breakatwhitespace=false,         
    breaklines=true,                 
    captionpos=b,                    
    keepspaces=true,                 
    numbers=left,                    
    numbersep=5pt,                  
    showspaces=false,                
    showstringspaces=false,
    showtabs=false,                  
    tabsize=2
}

\lstset{style=mystyle}

\title{Approval Ballot Triangles and Strict-Sense Ballots}
\author{Andrew Beveridge\footnote{Department of Mathematics, Statistics and Computer Science, Macalester College, 1600 Grand Avenue, St Paul, Minnesota, 55105, U.S.A. \texttt{abeverid@macalester.edu}} ~and Ian Calaway\footnote{Department of Economics, DePaul University, 1 E. Jackson Blvd, Chicago, Illinois, 60604, U.S.A. \texttt{icalaway@depaul.edu}}}
\date{}
	
\begin{document}

\maketitle

\begin{abstract}
We consider a family of binary triangular arrays, called \emph{approval ballot triangles} (ABTs), that are in bijection with totally symmetric self-complementary plane partitions (TSSCPPs). These triangles correspond to a  ballot process in which voters select their collection of approved candidates rather than voting for a single person.  We situate ABTs within the ballot problem literature and then show that a strict-sense ballot can be decomposed into a list of sequentially compatible ABTs.
\end{abstract}

\emph{Keywords}: totally  symmetric self-complementary plane partitions, strict-sense ballots, ballot problems, approval voting, lattice paths

\emph{Mathematics Subject Classification:} 05A19


\section{Introduction}

Inspired by a generalization of Bertrand's ballot problem \cite{bertrand}, we investigate  the following family of binary triangles. 
\begin{definition}
An \emph{approval ballot triangle} (ABT)  of size $n$ is a binary triangular array $A(i,j)$ for $1 \leq j \leq i \leq n$ satisfying the row compatibility condition
\begin{equation}
\label{eqn:abt-row}
\sum_{k=j}^i A(i,k) \leq \sum_{k=j}^{i+1} A(i+1,k) \quad  \mbox{for} \quad 1 \leq j \leq i \leq n-1.
\end{equation}
 We use $\mathcal{A}_n$ to denote the family of approval ballot triangles of size $n$.
\end{definition}
We use matrix indexing for our triangular arrays: rows are indexed top-to-bottom and columns are indexed left-to-right. The forty-two ABTs of $\abt{3}$ are shown in Figure \ref{fig:abt4}.
We rephrase the row compatibility constraint more intuitively: 
when considering column $j$ and above,  there are at least as many ones in row $i+1$ as in row $i$ of an ABT.

Approval ballot triangles model a voting procedure in which each voter chooses an ``approved'' subset of the available candidates. 
We describe this procedure in Section \ref{sec:ballot}, where we also situate ABTs within the ballot problem literature. 
Standard ballot problems have a natural bijection to lattice paths, and we will show in Section \ref{sec:abt} that ABTs biject to collections of non-crossing lattice paths.
More importantly, we show that the family of ABT triangles biject to a famous combinatorial family:  \emph{totally symmetric self-complementary plane partitions} (TSSCPPs), whose definition can be found in Section \ref{sec:tsscpp}. 

\ytableausetup{smalltableaux}

\begin{figure}[ht]
\centering
\begin{tikzpicture}[scale=.67]

\node (v000000) at (11,0) {\scriptsize $\begin{ytableau} ~ \\ ~ & ~ \\ ~ & ~ & ~ \\ \end{ytableau}$ };

\node (v000100) at (9,2) {\scriptsize $\begin{ytableau} ~ \\ ~ & ~ \\ 1 & ~ & ~ \\ \end{ytableau}$ };
\node (v000010) at (11,2) {\scriptsize $\begin{ytableau} ~ \\ ~ & ~ \\ ~ & 1 & ~ \\ \end{ytableau}$ };
\node (v000001) at (13,2) {\scriptsize $\begin{ytableau} ~ \\ ~ & ~ \\ ~ & ~ & 1 \\ \end{ytableau}$ };

\node (v010100) at (4,4) {\scriptsize $\begin{ytableau} ~ \\ 1 & ~ \\ 1 & ~ & ~ \\ \end{ytableau}$ };

\node (v000110) at (6,4) {\scriptsize $\begin{ytableau} ~ \\ ~ & ~ \\ 1 & 1 & ~ \\ \end{ytableau}$ };

\node (v000101) at (10,4) {\scriptsize $\begin{ytableau} ~ \\ ~ & ~ \\ 1 & ~ & 1 \\ \end{ytableau}$ };

\node (v010010) at (8,4) {\scriptsize $\begin{ytableau} ~ \\ 1 & ~ \\ ~ & 1 & ~ \\ \end{ytableau}$ };

\node (v001010) at (14,4) {\scriptsize $\begin{ytableau} ~ \\ ~ & 1 \\ ~ & 1 & ~ \\ \end{ytableau}$ };

\node (v000011) at (16,4) {\scriptsize $\begin{ytableau} ~ \\ ~ & ~ \\ ~ & 1 & 1 \\ \end{ytableau}$ };

\node (v001001) at (18,4) {\scriptsize $\begin{ytableau} ~ \\ ~ & 1 \\ ~ & ~ & 1 \\ \end{ytableau}$ };

\node (v010001) at (12,4) {\scriptsize $\begin{ytableau} ~ \\ 1 & ~ \\ ~ & ~ & 1 \\ \end{ytableau}$ };

\node (v110100) at (0,6) {\scriptsize $\begin{ytableau} 1 \\ 1 & ~ \\ 1 & ~ & ~ \\ \end{ytableau}$ };

\node (v110010) at (2,6) {\scriptsize $\begin{ytableau} 1 \\ 1 & ~ \\ ~ & 1 & ~ \\ \end{ytableau}$ };

\node (v001110) at (8,6) {\scriptsize $\begin{ytableau} ~ \\ ~ & 1 \\ 1 & 1 & ~ \\ \end{ytableau}$ };

\node (v110001) at (12,6) {\scriptsize $\begin{ytableau} 1 \\ 1 & ~ \\ ~ & ~ & 1 \\ \end{ytableau}$ };

\node (v000111) at (10,6) {\scriptsize $\begin{ytableau} ~ \\ ~ & ~ \\ 1 & 1 & 1 \\ \end{ytableau}$ };

\node (v010110) at (4,6) {\scriptsize $\begin{ytableau} ~ \\ 1 & ~ \\ 1 & 1 & ~ \\ \end{ytableau}$ };
\node (v010101) at (6,6) {\scriptsize $\begin{ytableau} ~ \\ 1 & ~ \\ 1 & ~ & 1 \\ \end{ytableau}$ };

\node (v001101) at (16,6) {\scriptsize $\begin{ytableau} ~ \\ ~ & 1 \\ 1 & ~ & 1 \\ \end{ytableau}$ };

\node (v010011) at (14,6) {\scriptsize $\begin{ytableau} ~ \\ 1 & ~ \\ ~ & 1 & 1 \\ \end{ytableau}$ };

\node (v001011) at (18,6) {\scriptsize $\begin{ytableau} ~ \\ ~ & 1 \\ ~ & 1 & 1 \\ \end{ytableau}$ };

\node (v101010) at (20,6) {\scriptsize $\begin{ytableau} 1 \\ ~ & 1 \\ ~ & 1 & ~ \\ \end{ytableau}$ };

\node (v101001) at (22,6) {\scriptsize $\begin{ytableau} 1 \\ ~ & 1 \\ ~ & ~ & 1 \\ \end{ytableau}$ };

\node at (1,8) {\scriptsize $\begin{ytableau} 1 \\ 1 & ~ \\ 1 & 1 & ~ \\ \end{ytableau}$ };

\node at (3,8) {\scriptsize $\begin{ytableau} 1 \\ 1 & ~ \\ 1 & ~ & 1 \\ \end{ytableau}$ };

\node at (5,8) {\scriptsize $\begin{ytableau} ~ \\ 1 & 1 \\ 1 & 1 & ~ \\ \end{ytableau}$ };

\node at (7,8) {\scriptsize $\begin{ytableau} 1 \\ ~ & 1 \\ 1 & 1 & ~ \\ \end{ytableau}$ };

\node at (9,8) {\scriptsize $\begin{ytableau} ~ \\ 1 & ~ \\ 1 & 1 & 1 \\ \end{ytableau}$ };

\node at (13,8) {\scriptsize $\begin{ytableau} ~ \\ ~ & 1 \\ 1 & 1 & 1 \\ \end{ytableau}$ };

\node at (17,8) {\scriptsize $\begin{ytableau} ~ \\ 1 & 1 \\ ~ & 1 & 1 \\ \end{ytableau}$ };

\node at (19,8) {\scriptsize $\begin{ytableau} 1 \\ ~ & 1 \\ 1 & ~ & 1 \\ \end{ytableau}$ };

\node at (21,8) {\scriptsize $\begin{ytableau} 1 \\ ~ & 1 \\ ~ & 1 & 1 \\ \end{ytableau}$ };

\node at (11,8) {\scriptsize $\begin{ytableau} ~ \\ 1 & 1 \\ 1 & ~ & 1 \\ \end{ytableau}$ };

\node at (15,8) {\scriptsize $\begin{ytableau} 1 \\ 1 & ~ \\ ~ & 1 & 1 \\ \end{ytableau}$ };

\node at (12,10) {\scriptsize $\begin{ytableau} ~ \\ 1 & 1 \\ 1 & 1 & 1 \\ \end{ytableau}$ };

\node at (6,10) {\scriptsize $\begin{ytableau} 1 \\ 1 & 1 \\ 1 & 1 & ~ \\ \end{ytableau}$ };

\node at (8,10) {\scriptsize $\begin{ytableau} 1 \\ 1 & ~ \\ 1 & 1 & 1 \\ \end{ytableau}$ };

\node at (14,10) {\scriptsize $\begin{ytableau} 1 \\ ~ & 1 \\ 1 & 1 & 1 \\ \end{ytableau}$ };

\node at (16,10) {\scriptsize $\begin{ytableau} 1 \\ 1 & 1 \\ ~ & 1 & 1 \\ \end{ytableau}$ };

\node at (10,10) {\scriptsize $\begin{ytableau} 1 \\ 1 & 1 \\ 1 & ~ & 1 \\ \end{ytableau}$ };

\node at (11,12) {\scriptsize $\begin{ytableau} 1 \\ 1 & 1 \\ 1 & 1 & 1 \\ \end{ytableau}$ };

\end{tikzpicture}

\caption{The 42 approval ballot triangles of size 3. The zero entries are rendered blank for visual clarity.}
\label{fig:abt4}

\end{figure}
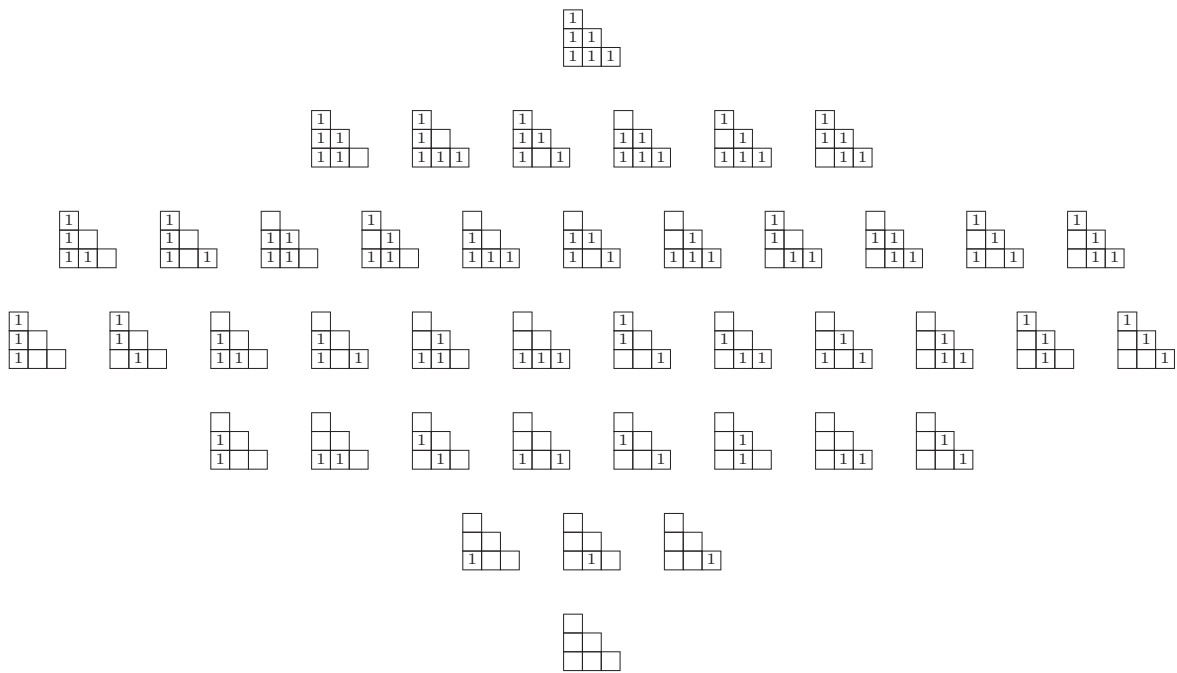

\ytableausetup{nosmalltableaux}

\begin{theorem}
\label{thm:abt-tsscpp}
The set $\mathcal{A}_{n-1}$ of approval ballot triangles of size $n-1$ are in bijection with TSSCPPs inside a $2n \times 2n \times 2n$ box. 
\end{theorem}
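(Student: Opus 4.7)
The plan is to establish the bijection by factoring it through an intermediate family of non-crossing lattice paths on a triangular region; that same family can be read off either from an ABT or from a TSSCPP.

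First, I would use the bijection between ABTs and non-crossing lattice paths promised in Section \ref{sec:abt}. Under that correspondence, the $i$-th row of an ABT $A \in \abt{n-1}$ is read as the step sequence of a lattice path $P_i$, with $0$'s and $1$'s encoding the two possible step directions. The row compatibility inequality (\ref{eqn:abt-row}) translates precisely into the non-crossing condition on the tuple of paths $(P_1,\ldots,P_{n-1})$, because the suffix sums $\sum_{k=j}^{i} A(i,k)$ are exactly the heights of $P_i$ at the boundary columns, and their monotonicity in $i$ is the pointwise domination that prevents paths from crossing.

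Next, I would invoke the classical description of TSSCPPs via a fundamental domain. Because of total symmetry and self-complementation, a TSSCPP $T$ in the $2n \times 2n \times 2n$ box is completely determined by its values on a fundamental domain of its symmetry action; this restriction can be encoded equivalently as a magog-type triangular array, and via a Lindström--Gessel--Viennot-type construction as a system of non-crossing lattice paths. I would then verify that this path system has the same number of paths, the same source set, the same sink set, the same step alphabet, and the same non-crossing convention as the system arising from ABTs, possibly after an explicit reflection or relabeling of indices. The count of $n-1$ paths on both sides, and the triangular shape of both domains, is what makes such a match plausible in the first place.

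The main obstacle, and the technical crux, is aligning these two path systems. On the ABT side, the paths arise from the row structure of the triangle; on the TSSCPP side, they come from the geometry of the fundamental domain. Reconciling the two descriptions requires a careful coordinate-by-coordinate comparison and attention to boundary data — the precise endpoints of the paths and the parity conventions attached to each step direction. Once this identification is carried out, the desired bijection $\abt{n-1} \longleftrightarrow \{\text{TSSCPPs in the } 2n \times 2n \times 2n \text{ box}\}$ follows by composing the two intermediate bijections, and the check that $|\abt{2}| = 7$ and $|\abt{3}| = 42$ agree with the known TSSCPP counts provides a convenient sanity test along the way.
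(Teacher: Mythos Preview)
Your strategy is essentially the paper's: factor the bijection through a lattice-path encoding of the TSSCPP fundamental domain and match it to the path system coming from the ABT rows. The paper, however, does not leave the ``careful coordinate-by-coordinate comparison'' as an unexecuted step. It interposes Striker's \emph{TSSCPP Boolean triangles} (Definition~\ref{def:tsscpp-boolean}) as the intermediate object: Proposition~\ref{prop:tsscpp-boolean} already gives TSSCPPs~$\leftrightarrow$ Boolean triangles, and Proposition~\ref{prop:abt-tsscpp-boolean} provides the explicit map $A(i,j)=1-B(n-j,i)$ (complement entries and rotate by $\pi/2$), verifying algebraically that the Boolean-triangle column constraint~\eqref{eqn:tsscpp-boolean} becomes the ABT row constraint~\eqref{eqn:abt-row}. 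The authors even flag your shortcut explicitly: ``It may be tempting to claim that the bijection between TSSCPPs and NCLPs is `obvious','' and then insist on the explicit algebraic matching instead. So your outline is correct, but to make it a proof you should either carry out the endpoint/step/orientation matching you promise, or --- more efficiently --- route through the Boolean-triangle bijection as the paper does.
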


Our main result is to reveal the connection between the TSSCPPs and strict-sense ballots.   

\begin{definition}
Consider an election with $n$ candidates in which  candidate $k$ receives exactly $n+1-k$ votes. A \emph{strict-sense ballot} (SSB) is an ordering of these  ${n+1 \choose 2}$ votes  so that candidate $k$  always leads candidate $k+1$ for $1 \leq k \leq n-1$.    
\end{definition}
For example,
\begin{equation}
\label{eqn:ssb-example}
(
1,1,1,2,1,
2,2,3,1,3,
1,2,3,1,2,
4,4,5,2,3,
4,5,3,4,6,
5,6,7)
\end{equation}
is a strict-sense ballot for seven candidates. The number of SSBs for $n$ candidates is
$$
{n+1 \choose 2}! \frac{ \prod_{k=1}^{n-1} k!}{\prod_{k=1}^n (2k-1)!},
$$
see OEIS A003121 \cite{oeis}.
A strict-sense ballot is equivalent to a shifted standard Young tableau (SYT) of staircase shape $(n,n-1,\ldots,1)$  \cite{schur,thrall,hiller}.  We describe the  shifted SYT bijection in Section \ref{sec:ssb}.

We will show that a SSB decomposes into a list of sequentially compatible ABTs (and therefore, also to a list of sequentially compatible TSSCPPs). Before defining ``compatibility'' between our triangular arrays,   we set some notation for subsequences and sub-arrays, and define a partial order on binary sequences.

\begin{definition}
For a sequence $S=(s_1, s_2, \ldots, s_n)$, let $S[i:j]$ denote the subsequence $(s_i, s_{i+1}, \ldots, s_j)$. For a triangular array $T = T(i,j)$ for $1 \leq j \leq i \leq m$, let $T(i, [j:k])$ denote the sub-row $(T(i,j), T(i,j+1), \ldots, T(i,k))$.
\end{definition}

For convenience, we define $S[i,j] = \emptyset$ (the empty sequence) when $j < i$ and $T(i,[j:k]) = \emptyset$ when  either $i < 1$ or  $k < j$.

\begin{definition}
Let $\mathcal{S}$ denote the set of binary sequences (of any length). We define the partial order $(\mathcal{S}, \prec)$ as follows. Let $S,T \in \mathcal{S}$ where   $S=(s_1, s_2, \ldots, s_m)$ and $T=(t_1, t_2, \ldots, t_n)$ Then  $S \prec T$ when  $m \leq n$ and
\begin{equation}
\label{eqn:abt-rowXXXX}
\sum_{k=\ell}^m s_k \leq \sum_{k=\ell}^{n} t_k \quad  \mbox{for} \quad 1 \leq \ell \leq m.
\end{equation}
\end{definition}

In other words $S \prec T$, whenever $|S| \leq |T|$ and there are at least as many 1's in $S[\ell:m]$ as in $T[\ell:n]$.
Note that we are  summing the last $m+1-\ell$ entries of $S$ and the last $n+1-\ell$ entries of $T$. 
 An intuitive way to navigate $m<n$ is by appending $n-m$ zeros to $S$ before checking compatibility. For example, we have
$$
(1,0,0,1,1) \prec (0,1,0,0,1,0,0,1) \Longleftrightarrow
(1,0,0,1,1,\underbrace{0,0,0}_{\mbox{\scriptsize{appended}}}) \prec (0,1,0,0,1,0,0,1)
$$
where we have appended three zeros to $(1,0,0,1,1)$ to obtain sequences of equal lengths. In the case that $m=n$, a simple way to compare $S=(1,0,0,1,1,0,0,0)$ and $T = (0,1,0,0,1,0,0,1)$ is to look at their reverse partial sums 
$$
\hat{S} = (3,2,2,2,1,0,0,0) \quad \mbox{and} \quad \hat{T}=(3,3,2,2,2,1,1,1),
$$
and observe that $\hat{S}(i) \leq \hat{T}(i)$ for all $i$. Finally, we 
note that $S \prec T$ and $T \prec S$ if and only if $m=n$ and $S = T$. Therefore, we use the symbol $\preceq$ when appropriate.

Our first example of this poset notation in action is to reformulate the ABT row compatibility condition \eqref{eqn:abt-row}. For  $A \in \mathcal{A}_n$, we now have the simpler formulation 
$$
A(i,[1:i]) \prec A(i+1,[1:i+1]) \quad \mbox{for} \quad 1 \leq i \leq n-1.
$$
which says that row $i$ precedes row $i+1$ in $(S, \prec)$.  This  poset notation allows us to define our list of compatible approval ballot triangles in an efficient manner.

\begin{definition}
\label{def:abt-hyper}
An  \emph{approval ballot hypertriangle} (ABH) of size $n$ is a sequence $(A_{n}, A_{n-1} \ldots, A_1)$ where $A_{s} \in  \mathcal{A}_{s}$ and such that
whenever $A_s(t,u)=1$ where $2 \leq u \leq t \leq s$, we have the following compatibility conditions on row $t$ of $A_s$. Setting 
$$
s' = s - \sum_{j=u}^t A(t,j) \quad \mbox{and} \quad t'=u-1,
$$
the ABH must satisfy the triangle compatibility condition
\begin{equation}
\label{eqn:abh-compatibility}
 A_{s'} (t'-1,[1:t'-1]) \prec A_s (t,[1:t']) \preceq A_{s'}(t',[1:t'])   
\end{equation}
where the first inequality holds when $1 < t' \leq s'$ and the second holds when $1 \leq t' \leq s'$.
\end{definition}

Figure \ref{fig:abh-example} shows an approval ballot hypertriangle $A=(A_5,A_3,A_2,A_2,A_1)$ of size 5. In Section \ref{sec:ballot}, we explain the geometric meaning of the ABH triangle compatibility conditions. Recalling that each ABT corresponds to a set of non-crossing lattice paths, we will see that condition \eqref{eqn:abh-compatibility} ensures that paths from different ABTs are also pairwise non-crossing. (The non-crossing lattice paths for the ABH of Figure \ref{fig:abh-example} are shown in Figure \ref{fig:abt-hyper} below.)

\ytableausetup{smalltableaux}
\begin{figure}[ht]
    \centering
\begin{tikzpicture}

\node at (-4,0) {$(A_5,A_4,A_3,A_2,A_1)$};

\node at (-1.75,0) {$=$};

\node at (0,0) {
\begin{ytableau}
~ \\
1 & ~ \\
1 & ~ & 1 \\
~ & 1 & 1 & ~  \\
~ & ~ & 1 & 1 & ~
\end{ytableau}
};

\node at (2.4,-.15) {
\begin{ytableau}
~ \\ 
~ & 1 \\
~ & ~ & 1 \\
1 & 1 & 1 & ~ 
\end{ytableau}
};

\node at (4.5,-.3) {
\begin{ytableau}
~ \\ 
1 & 1 \\
~ & 1 & 1 \\
\end{ytableau}
};

\node at (6.3,-.45) {
\begin{ytableau}
1 \\ 
~ & 1 \\
\end{ytableau}
};

\node at (7.8,-.6) {
\begin{ytableau}
1 \\ 
\end{ytableau}
};

\end{tikzpicture}
    \caption{An approval ballot hypertriangle  $(A_5,A_4,A_3,A_2,A_1)$ of size 5.} 
    \label{fig:abh-example}
\end{figure}
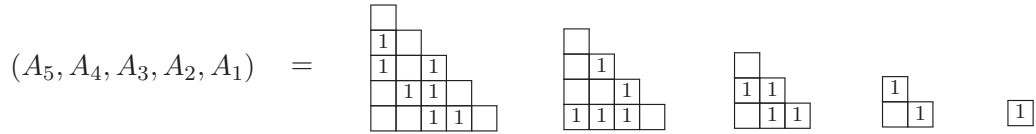

When $A_s(t,u)=1$,  for $2 \leq u \leq t$, there are two row constraints between the triangle $A_s$ and the smaller triangle $A_{s'}$. Figure \ref{fig:abh-row-constraint} shows a schematic example of the ABH constraints induced by a row of  triangle $A_{9}$ in the ABH $(A_{9}, A_8, \ldots, A_1)$. Subrow $A_{9}(7,[2:7])$ contains three ones, located at indices 7, 4 and 3. This leads to the constraints
$$
\begin{array}{cccccc}
A_8(5,[1:5]) &\prec& A_{9}(7,[1:6]) &\preceq& A_8(6,[1:6]), \\
A_7(2,[1:2]) &\prec& A_{9}(7,[1:3]) &\preceq& A_7(3,[1:3]), \\
A_6(1,[1:1]) &\prec& A_{9}(7,[1:2]) &\preceq& A_6(6,[1:2]). \\
\end{array}
$$
For the second of these constraints, we have
$A_{9}(7,[1:3]) = (1,0,1)$ and therefore
 $A_7(2,[1:2]) \in \{ (0,0), (0,1), (1,0), (1,1) \}$ and
$A_7(3,[1:3]) \in \{ (1,0,1), (0,1,1), (1,1,1) \}$.

\ytableausetup{smalltableaux}
\begin{figure}[ht]
    \centering
\begin{tikzpicture}


\node at (0,-2) {\scriptsize $(s,t)=(10,7)$};

\node at (0.65,0.65) {\small $A_{9}$};

\node at (0,0) {
\begin{ytableau}
~ \\
~ & ~ \\
~ & ~ & ~ \\
~ & ~ & ~ & ~  \\
~ & ~ & ~ & ~  & ~ \\
~ & ~ & ~ & ~  &  ~ & ~ \\
 1 &0 &  *(gray!25) 1 & *(gray!50)1 & 0 & 0 & *(gray!75) 1 \\
~ & ~ & ~ & ~  &  ~ & ~ & ~ & ~\\
~ & ~ & ~ & ~  &  ~ & ~ & ~ & ~ & ~\\
\end{ytableau}
};

\begin{scope}[shift={(3.5,-0.15)}]

\node at (.05,-1.65+.15) {\scriptsize $u=7$};
\node at (0,-1.65-.35) {\scriptsize $(s',t')=(9,6)$};

\node at (0.65,0.8) {\small $A_{8}$};

\end{scope}

\begin{scope}[shift={(3.5,0.15)}]

\node at (0,0) {
\begin{ytableau}
~ \\
~ & ~ \\
~ & ~ & ~ \\ 
~ & ~ & ~ & ~  \\
*(gray!75) ~ & *(gray!75) ~ & *(gray!75) ~ & *(gray!75) ~  & *(gray!75) ~  \\
*(gray!75) ~ & *(gray!75) ~ & *(gray!75) ~ &  *(gray!75)~  &   *(gray!75) ~ &  *(gray!75)~ \\
~ & ~ & ~ & ~  &  ~ & ~ & ~ \\
~ & ~ & ~ & ~  &  ~ & ~ & ~ & ~\\
\end{ytableau}
};

\end{scope}

\begin{scope}[shift={(6.7,-0.3)}]

\node at (.05,-1.65+.3) {\scriptsize $u=4$};
\node at (0,-1.65-.2) {\scriptsize $(s',t')=(8,3)$};

\node at (0.65,0.95) {\small $A_{7}$};

\end{scope}

\begin{scope}[shift={(6.7,0.3)}]

\node at (0,0) {
\begin{ytableau}
~ \\
*(gray!50) ~ & *(gray!50) ~ \\
*(gray!50) ~ & *(gray!50)  ~ & *(gray!50)  ~ \\
~ & ~ & ~ & ~  \\
~ & ~ & ~ & ~  & ~ \\
~ & ~ & ~ & ~  &  ~ & ~ \\
~ & ~ & ~ & ~  &  ~ & ~ & ~ \\
\end{ytableau}
};

\end{scope}

\begin{scope}[shift={(9.55,-0.45)}]

\node at (.05,-1.65+.45) {\scriptsize $u=3$};
\node at (0,-1.65-.05) {\scriptsize $(s',t')=(7,2)$};

\node at (0.65,1.10) {\small $A_{6}$};

\end{scope}

\begin{scope}[shift={(9.55,0.45)}]

\node at (0,0) {
\begin{ytableau}
*(gray!25) ~  \\
*(gray!25) ~ & *(gray!25)   ~ \\
~ & ~ & ~  \\
~ & ~ & ~ & ~  \\
~ & ~ & ~ & ~  & ~\\
~ & ~ & ~ & ~  &  ~ & ~\\
\end{ytableau}
};

\end{scope}

\draw[fill] (11.5,-.1) circle (1pt); 
\draw[fill] (11.75,-.1) circle (1pt); 
\draw[fill] (12,-.1) circle (1pt); 

\end{tikzpicture}

\caption{An example of the ABH triangle constraint of Definition \ref{def:abt-hyper}. We have an ABH $(A_{9}, A_{8}, A_7, A_6, \ldots)$. The three ones in $A_{9}(7,[2:7])$ are located at $(t,u) \in \{(7,3), (7,4), (7,7)\}$. Each of these ones induces two constraints on one of the next three triangles in the ABH. Each one in row 7 of $A_9$ has the same shading as the two rows affected by the corresponding constraint.}

\label{fig:abh-row-constraint}

\end{figure}
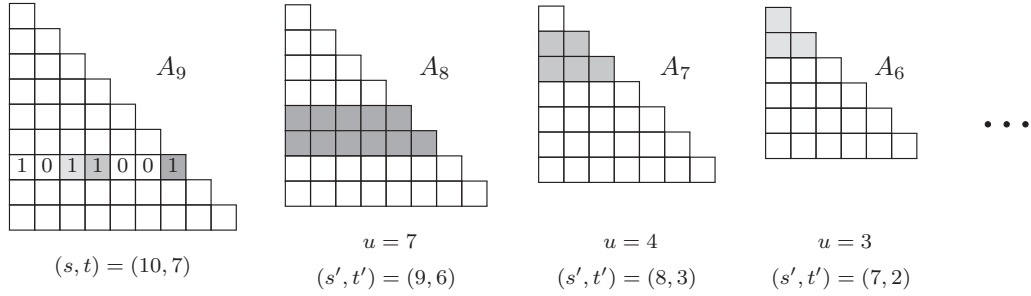

This brings us to our main theorem.

\begin{theorem}
\label{thm:hypertriangle}
Approval ballot hypertriangles $(A_{n-2}, A_{n-3}, \ldots, A_1)$ of size $n-2$  are in bijection with strict-sense ballots for $n$ candidates.
\end{theorem}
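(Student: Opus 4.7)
The plan is to construct an explicit bijection by induction on $n$, leveraging the non-crossing lattice path representation of ABTs (established around Theorem~\ref{thm:abt-tsscpp}) and the shifted standard Young tableau model for strict-sense ballots. The base case $n=2$ is immediate: the unique SSB (with votes $1,1,2$) pairs with the unique empty ABH of size $0$, and a quick sanity check with $n=3$ matches the two SSBs against the two ABHs consisting of a single element of $\mathcal{A}_1$.

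For the inductive step, given an SSB $B$ for $n$ candidates, equivalently a shifted SYT $T$ of staircase shape $(n,n-1,\ldots,1)$, I would first read off the largest triangle $A_{n-2}$ by recording, for each row of $T$, a binary pattern encoding the relative order of key entries---for instance, $A_{n-2}(i,j)$ would flag whether a prescribed ``milestone'' vote for candidate $i+1$ precedes the corresponding vote for candidate $i+2$. After extracting $A_{n-2}$, I would form a residual SSB $B'$ by deleting or relabeling a specific set of votes (naturally, those involving the last-placed candidate), and apply the inductive hypothesis to obtain the remaining triangles $(A_{n-3},\ldots,A_1)$. The inverse direction proceeds symmetrically: starting from the single bit $A_1$, insert votes for each newly introduced candidate into the evolving ballot at positions dictated by the successive triangles, with the triangle compatibility conditions of \eqref{eqn:abh-compatibility} dictating exactly where each vote lands.

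The main obstacle will be verifying the non-local triangle compatibility conditions. A single $1$ in $A_s$ at position $(t,u)$ constrains two rows of $A_{s'}$ for an index $s' = s - \sum_{j=u}^{t} A_s(t,j)$ that depends on the placement of the other $1$s in the same row. To manage this, I would organize the argument around the cascade structure implicit in Definition~\ref{def:abt-hyper}: a row of $A_s$ with $1$s at positions $u_1 < u_2 < \cdots < u_k$ (all $\geq 2$) simultaneously imposes linked constraints on $A_{s-1}, A_{s-2}, \ldots, A_{s-k}$ in a nested, shifted fashion, and showing that the constructed ABH respects all of them is the core technical difficulty. I expect that reformulating both objects via mutually non-crossing lattice path systems---as alluded to in the paragraph preceding this theorem and illustrated in the intended Figure of non-crossing paths for the ABH of size $5$---will convert the algebraic $\prec$ inequalities into geometric non-crossing conditions, allowing a direct path-swapping verification that both $\Phi$ and its inverse $\Psi$ are well defined.
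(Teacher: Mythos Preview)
Your proposal sketches an inductive peeling-off of one ABT at a time, but the paper's proof is \emph{direct}, and your inductive framing introduces genuine gaps.

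The paper builds the bijection in one shot via lattice paths. Given the SSB triangle $T$, for every entry $T(r,j)$ with $1\le j<r\le n$ it draws the unique north/east lattice path $Q_{r-1,r-1-j}$ from the northeast corner of that cell to the main diagonal, separating entries larger than $T(r,j)$ (northwest) from smaller ones (southeast). The strictly decreasing rows and columns of $T$ force \emph{all} of these paths to be pairwise non-crossing simultaneously. For fixed $r$, the paths $\{Q_{r-1,j}\}_j$ form an NCLP and hence an ABT $A_{r-2}$; the non-crossing condition between a path $Q_{s,t}$ and the paths originating one row above, at the exact point where $Q_{s,t}$ takes a north step into that row, is then shown to translate verbatim into the two inequalities of \eqref{eqn:abh-compatibility}. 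Bijectivity follows because the paths recover the total order on the entries of $T$.

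Your inductive scheme has two concrete problems. First, the ``residual SSB'' step is not well-posed as written: deleting the \emph{last-placed} candidate (candidate $n$, with one vote) does not leave an SSB for $n-1$ candidates, since the remaining vote multiplicities are $n,n-1,\ldots,2$ rather than $n-1,\ldots,1$. You would need to delete candidate~1 (the bottom row of the SSB triangle) and relabel, which is the opposite of what you wrote. Second, and more seriously, even with that fix the induction does not close. The ABH compatibility condition links $A_{n-2}$ to $A_{s'}$ for \emph{every} $s'<n-2$ reached by a north step, not just to $A_{n-3}$; these cross-constraints are not part of your inductive hypothesis, so peeling off $A_{n-2}$ and invoking the hypothesis on the residual SSB verifies nothing about them. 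Your final paragraph correctly senses that the cure is a global non-crossing picture, but once you have that picture the induction is superfluous: the direct argument above is the whole proof. Your description of how $A_{n-2}(i,j)$ is extracted (``whether a milestone vote for candidate $i+1$ precedes the corresponding vote for candidate $i+2$'') is also too vague to match any concrete map; the paper's rule is that $A_{r-2}(i,j)=1$ exactly when the $j$th step (from the right) of the separating path for $T(r,r-i)$ is a north step.
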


It is well-known that a TSSCPP can be viewed as a list of sequentially compatible Dyck paths of decreasing sizes \cite{striker2011}. Combined with Theorem \ref{thm:abt-tsscpp}, Theorem  \ref{thm:hypertriangle} states that a strict-sense ballot can be viewed as a list of sequentially compatible TSSCPPs of decreasing sizes. 
This constructive phenomenon is one of the natural laws of combinatorics: careful aggregation of simple structures gives rise to other interesting structures.

The paper is organized as follows. In Section \ref{sec:ballot}, we establish ABTs within the realm of the ballot problems. 
Section \ref{sec:abt} starts with a discussion of ABTs and nests of lattice paths, followed by the proof of Theorem \ref{thm:abt-tsscpp}. 
 We discuss strict-sense ballots in Section \ref{sec:ssb}, where we prove Theorem \ref{thm:hypertriangle}.  We reflect on our results in Section \ref{sec:end}.

\section{Ballot Problems}
\label{sec:ballot}

In this section, we situate ABTs  within the domain of ballot problems.
See Barton and Mallows \cite{barton}, Tak{\`a}cs \cite{takacs}, Renault \cite{renault}, and Addario-Berry and Reed \cite{addario} for ballot problem surveys.

Consider an election between candidates $A$ and $B$, receiving $a$ and $b$ respective votes with $a > b$. Bertrand  \cite{bertrand} showed that there are 
$
\frac{a-b}{a+b} {a+b \choose a}
$
distinct orderings of the $a+b$ ballots in which candidate $A$ is always ahead of candidate $B$. 
These orderings are in bijection with lattice paths from $(0,0)$ to $(a+b,a-b)$ using up $(1,1)$ steps and down $(1,-1)$ steps and staying strictly above the $x$-axis.
If we relax the constraint to allow for ties during the partial vote count, the number of orderings is   
$
\frac{a+1-b}{a+1} {a+b \choose a}.
$
The corresponding lattice paths are now allowed to touch the $x$-axis.
In the case of a tied election $a=b=n$, the solution to the ``$A$ never trails $B$'' ballot problem is the $n$th Catalan number
$
C_n = \frac{1}{n+1}{2n \choose n},
$
see OEIS A000108 \cite{oeis}.
The corresponding lattice paths, from $(0,0)$ to $(2n,0)$, using up $(1,1)$ steps and down $(1,-1)$ steps, that do not travel below the $x$-axis, are called \emph{Dyck paths}.

Next, consider an electorate $[n]$ where every voter is also an eligible candidate. Examples include a club electing a leader from among its members, and an assembly nominating members for a committee.

\begin{definition}
\label{def:ballot-seq}
The sequence  $b_1, \ldots, b_n$ where $b_k \in [k]$ is a \emph{ballot sequence of length $n$} when for every $1 \leq s \leq n$ and $1 \leq t  \leq n-1$, the partial sequence $b_1, \ldots, b_s$ contains at least as many $t$'s  as $(t+1)$'s. In particular the final tally for  $t$ is greater than or equal to the final tally for  $t+1$.
\end{definition}

In a ballot sequence, candidate $t$ never trails candidate $(t+1)$ as the votes are counted. One particular consequence is that $b_i \in [i]$, meaning that person $i$ cannot vote for person $j > i$. Here is an elementary voting procedure that obeys this rule: people enter the room one at a time to cast their vote for someone who is already present (including themselves). The votes are then revealed in the order that they were cast. We are interested in the sequences that adhere to the ``$t$ never trails $t+1$'' constraint.

We are now ready to describe the ballot process that corresponds to approval ballot triangles. 
Just as for a ballot sequence, voter $k$ can only vote for candidates in $[k]$. 
Instead of using the standard voting process, we will use \emph{approval voting} \cite{brams+fishburn-1978, brams+fishburn-2005}. Each voter specifies their subset of  ``approved'' candidates (rather than selecting a single preferred candidate). Each of these approved candidates receives one vote in their favor. The winner of the election is the candidate who receives the most approval votes. For some mathematical investigations of approval voting, see \cite{berg2010voting,su+zerbib}.

\begin{definition}
Let $B_1, B_2, \ldots, B_n$ be a sequence of sets   $B_k \subset [k]$. This sequence is an \emph{approval ballot sequence} when for every for every $1 \leq s \leq n$ and $1 \leq t  \leq n-1$, the partial set sequence $B_1, B_2, \ldots, B_s$ contains at least as many $t$'s as $(t+1)$'s. 
\end{definition}

Note that $B_k = \emptyset$  means that  the $k$th voter abstained. At the other extreme, only the $n$th voter can approve of every candidate because $B_k \subset [k]$.

\begin{prop}
Approval ballot triangles of size $n$ are in bijection with approval ballot sequences of length $n$.
\end{prop}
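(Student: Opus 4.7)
The plan is to produce an explicit bijection $\Phi : \mathcal{A}_n \to \{\text{approval ballot sequences of length } n\}$ obtained by reflecting the triangular array across its anti-diagonal. For $A \in \mathcal{A}_n$, define $\Phi(A) = (B_1, \ldots, B_n)$ by the rule
\[
c \in B_k \iff A(n+1-c,\, n+1-k) = 1 \qquad (1 \le c \le k \le n).
\]
The index map $(i, j) \mapsto (n+1-j,\, n+1-i)$ is an involution of the triangular index set $\{(i, j) : 1 \le j \le i \le n\}$, so $\Phi$ is automatically a bijection between triangular $\{0,1\}$-arrays of size $n$ and sequences of sets with $B_k \subseteq [k]$ (the inclusion is forced by the triangular shape of the index set). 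The only substantive task is to verify that this reflection carries the ABT row condition onto the approval ballot condition.

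To carry that out, I would encode the ballot sequence as the triangular array $M(k, c) = \mathbf{1}[c \in B_k]$ for $1 \le c \le k \le n$ and rewrite the approval ballot condition as
\[
\sum_{k=1}^s M(k, t) \ \ge\ \sum_{k=1}^s M(k, t+1), \qquad 1 \le t \le n-1,\ 1 \le s \le n.
\]
Since $M(k, c) = 0$ whenever $c > k$, the pairs with $s \le t$ hold trivially, and only the range $t+1 \le s \le n$ imposes a non-trivial constraint. For those pairs, substituting $M(k, c) = A(n+1-c,\, n+1-k)$ and reindexing with $k' = n+1-k$, $i = n+1-t$, $j = n+1-s$ transforms the inequality into
\[
\sum_{k'=j}^{i-1} A(i-1, k') \ \le\ \sum_{k'=j}^{i} A(i, k'),
\]
which is exactly the ABT row compatibility condition \eqref{eqn:abt-row} at parameters $(i-1, j)$. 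As $(s, t)$ sweeps the non-trivial range, the pair $(i-1, j)$ sweeps bijectively over $\{(i', j) : 1 \le j \le i' \le n-1\}$, which is precisely the index set of the ABT row constraints; so the two families of inequalities correspond clause-by-clause.

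I expect the only real obstacle to be notational: keeping the two simultaneous index reversals (candidates $t \leftrightarrow n+1-i$ and voters $k \leftrightarrow n+1-j$) straight, and confirming that the approval ballot constraints outside the substantive range do not secretly require anything more of $A$. Both concerns dissolve on inspection given $B_k \subseteq [k]$ and the triangular shape, so once the reindexing is recorded cleanly the proposition follows.
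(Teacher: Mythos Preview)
Your proposal is correct and is essentially the same argument as the paper's: both use the anti-diagonal reflection $A(i,j)=\mathbf{1}_{n+1-i}(B_{n+1-j})$ and then perform the change of variables that carries the approval ballot inequalities onto the ABT row condition \eqref{eqn:abt-row}. Your presentation is in fact a bit cleaner---you explicitly isolate the trivial range $s\le t$ and verify that the reindexing sweeps bijectively over the ABT constraint set---but the bijection and the computation are identical in substance.
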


\begin{proof}
Consider the approval ballot sequence $B_1, B_2, \ldots, B_{n}$.
Since $j \notin B_i$ for $j > i$, we can write the approval ballot sequence condition as
$$
\sum_{u=t+1}^s \mathbf{1}_{t+1}(B_u) \leq 
\sum_{u=t}^s \mathbf{1}_{t}(B_u). 
$$
for $1 \leq t \leq s \leq n-1$, where $\mathbf{1}_j(S)$ is the indicator function for $j \in S$.

The row and column indexing for approval ballot triangle $A$ is the reverse of the approval ballot sequence $B_1, B_2, \ldots, B_{n}$. For $1 \leq s \leq u \leq n-1$, set $A(t,u) = \mathbf{1}_{n+1-t}(B_{n+1-r})$, which is equivalent to $\mathbf{1}_{t}(B_{u}) = A(n+1-t, n+1-u)$. The approval ballot sequence condition becomes
\begin{align*}
\sum_{u=t+1}^s A(n-t, n+1-u) &\leq 
\sum_{u=t}^s A(n+1-t, n+1-u)
\\
\sum_{k=j}^{i} A(i, k) &\leq 
\sum_{k=j}^{i+1} A(i+1, k)
\end{align*}
where we use the change of variables $k=n+1-r$ and $i=n-t$ and $j=n+1-s$. 
\end{proof}

\begin{remark}
Given the re-indexing complexity of the previous proof, it is natural to wonder why we have adopted the ``reverse indexing'' for ABTs. The reason is simple: our definition of ABTs leads to the very simple ABH compatibility equation \eqref{eqn:abh-compatibility}. Other (seemingly natural) choices for ABT organization result in far more complicated expressions of this compatibility constraint.
\end{remark}

We conclude this section by observing how ABTs generalize Dyck paths and Motzkin paths (defined below). Consider  an approval election between candidates $A$ and $B$,  who both receive $k$ votes. If no voter abstains and  no voter approved of both $A$ and $B$, then we have a standard election  where $2k$ ballots were cast. Therefore the number of ways to count the votes so that $A$ never trails $B$  is the $k$th Catalan number $C_k$.
The corresponding ABTs have $2k$ columns. There is a unique one in each column, and these ones are restricted to the last two rows, each of which contains $k$ ones. The last two rows of the ABTs for $2k=6$ ballots are shown in Figure \ref{fig:abt-catalan}(a). Read from right-to-left, these encode Dyck paths of length $6$.

Next, consider a $n$-voter election  between candidates $A$ and $B$  where abstentions are allowed, but not approval of both candidates. If $A$ and $B$ receive an equal number $0 \leq k \leq \lfloor n/2 \rfloor$ of votes, then the number of ways to count the votes so that $A$ never trails $B$  is the $n$th Motzkin number $M_n = \sum_{k=0}^{\lfloor n/2 \rfloor} {n \choose 2k} C_k$, see OEIS A001006 \cite{oeis}. 
The corresponding lattice paths from $(0,0)$ to $(n,0)$ using up $(1,1)$ steps, down $(1,-1)$ steps and horizontal $(1,0)$ steps, while never moving below the $x$-axis, are called \emph{Motzkin paths}. These three step types correspond to $A$ votes, $B$ votes and abstentions, respectively. The ABTs for Motzkin paths have $n$ columns, at most one 1 in each column, and these ones are restricted to the last two rows, with an equal number of ones these rows. Figure \ref{fig:abt-catalan}(b) shows the last two rows of the ABTs for $n=4$ that encode the Motzkin paths for these elections, read right-to-left.

\ytableausetup{smalltableaux}

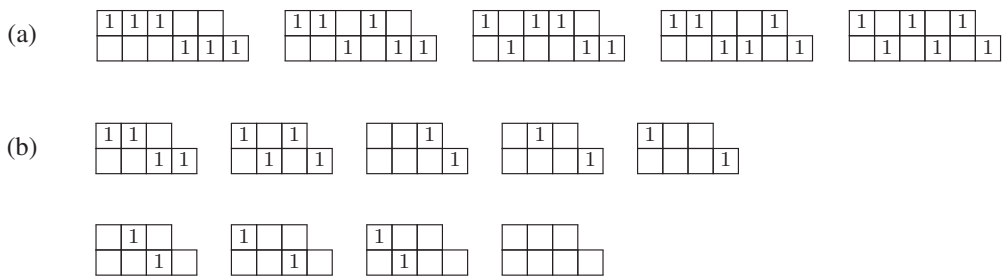
\begin{figure}[ht]

\begin{center}
\begin{tikzpicture}

\node at (-2,0) {\small (a)};

\node at (0,0) {\begin{ytableau} 
1 & 1 & 1 & ~ & ~ \\
~ & ~ &  ~ & 1 & 1 & 1
\end{ytableau}};

\node at (2.5,0) {\begin{ytableau} 
1 & 1 & ~ & 1 & ~ \\
~ & ~ &  1 & ~ & 1 & 1 
\end{ytableau}};

\node at (5,0) {\begin{ytableau} 
1 & ~ & 1 & 1 & ~ \\
~ & 1 &  ~ & ~ & 1 & 1 
\end{ytableau}};

\node at (7.5,0) {\begin{ytableau} 
1 & 1 & ~ & ~ & 1 \\
~ & ~ &  1 & 1 & ~ & 1 
\end{ytableau}};

\node at (10,0) {\begin{ytableau} 
1 & ~ & 1 & ~ & 1 \\
~ & 1 &  ~ & 1 & ~ & 1 
\end{ytableau}};

\node at (-2,-1.5) {\small (b)};

\begin{scope}[shift={(-.35,-1.5)}, scale=.9]

\node at (2,0) {\begin{ytableau} 
 1 & ~ & 1 \\
  ~ & 1 & ~ & 1
\end{ytableau}};

\node at (0,0) {\begin{ytableau} 
 1 & 1 & ~ \\
  ~ & ~ & 1 & 1
\end{ytableau}};

\node at (4,0) {\begin{ytableau} 
 ~ & ~ & 1 \\
  ~ & ~ & ~ & 1
\end{ytableau}};

\node at (6,0) {\begin{ytableau} 
 ~ & 1 & ~ \\
  ~ & ~ & ~ & 1
\end{ytableau}};

\node at (8,0) {\begin{ytableau} 
 1 & ~ & ~ \\
  ~ & ~ & ~ & 1
\end{ytableau}};

\node at (0,-1.5) {\begin{ytableau} 
 ~ & 1 & ~ \\
  ~ & ~ & 1 & ~
\end{ytableau}};

\node at (2,-1.5) {\begin{ytableau} 
 1 & ~ & ~ \\
  ~ & ~ & 1 & ~
\end{ytableau}};

\node at (4,-1.5) {\begin{ytableau} 
 1 & ~ & ~ \\
  ~ & 1 & ~ & ~
\end{ytableau}};

\node at (6,-1.5) {\begin{ytableau} 
 ~ & ~ & ~ \\
  ~ & ~ & ~ & ~
\end{ytableau}};

\end{scope}

\end{tikzpicture}
\end{center}

\caption{(a) The last two rows of the five ABTs corresponding to Dyck paths of length 6. (b) The last two rows of the nine ABTs corresponding to Motzkin paths of length 4. }

\label{fig:abt-catalan}

\end{figure}

\ytableausetup{nosmalltableaux}


\section{Approval Ballot Triangles}

\label{sec:abt}

The primary goal of this section is to prove Theorem \ref{thm:abt-tsscpp}. Section \ref{sec:tsscpp} offers a quick introduction to totally symmetric self-complementary plane partitions. In Section \ref{sec:abt-lattice-paths}, we represent a TSSCPP as a nest of non-crossing lattice paths. We then prove Theorem \ref{thm:abt-tsscpp}.

\subsection{Totally Symmetric Self-Complementary Plane Partitions}

\label{sec:tsscpp}

A \emph{plane partition} $\pi = \pi(i,j)$ is a two-dimensional array of nonnegative integers with weakly decreasing rows and columns. Typically, we represent these values as stacks of cubes in $\mathbb{R}^3$. A \emph{totally symmetric self-complementary plane partition} (TSSCPP) in a $2n \times 2n \times 2n$ box is a  $2n \times 2n$ plane partition whose cube stack representation has the maximum possible symmetry. More precisely, it is invariant under permutation of its three axes, and it is equal to its complement (which is the set of missing cubes in the $2n \times 2n \times 2n$ box). See Figure \ref{fig:tsscpp}(a) for an example where $n=5$. 
Andrews \cite{andrews} proved that the number of TSSCPP of order $n$  is 
$$
\prod_{k=0}^{n-1} \frac{(3k+1)!}{(n+k)!},
$$
see OEIS A005130 \cite{oeis}.
Bressoud \cite{bressoud} recounts the history of this formula, which sits at the confluence of many combinatorial families.
Notably, this  formula also enumerates descending plane partitions (DPPs) and alternating sign matrices (ASMs). Andrews \cite{andrews1979} had already proven this formula for descending plane partitions. Establishing this result for alternating sign matrices was  first achieved by Zeilberger \cite{zeilberger} and then by Kuperberg \cite{kuperberg} with a simpler argument drawing on the 6-vertex model from statistical mechanics. Recently, Fischer and Konvalinka \cite{fischer1,fischer2} established a landmark bijection between DPPs and ASMs.

Thanks to their symmetries, a TSSCPP $\pi$ is completely determined by its \emph{fundamental domain}, which is the triangular  subarray for indices $n+1 \leq i \leq j \leq 2n$. It is then convenient to change coordinates, and also standard to add one to each entry to obtain a positive triangular array $M(i,j) = 1 +  \pi(2n+1-i, 2n+1-j)$ for $1 \leq j \leq i \leq n$, called a \emph{magog triangle}. A magog triangle can be defined directly as  a triangular array such that  $1 \leq M(i,j) \leq j$ and with  weakly increasing rows and weakly increasing columns.
Figure \ref{fig:tsscpp} shows a  TSSCPP of order 5 and its associated fundamental region and magog triangle.

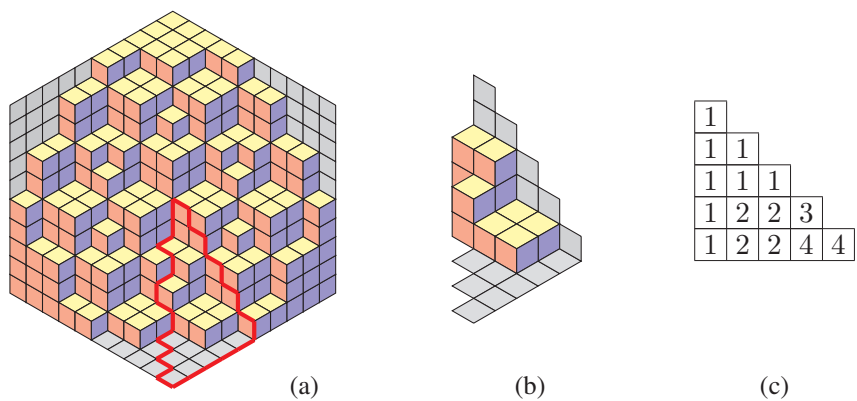
\begin{figure}[ht]

\begin{center}

 \begin{tikzpicture}

 \begin{scope}[scale=.25]

 \planepartition{
 {10,10,10,10,10,9,9,7,7,5},
 {10,10,10,9,9,7,7,5,5,3},
 {10,10,10,9,9,7,6,5,5,3},
 {10,9,9,8,7,5,5,4,3,1},
 {10,9,9,7,7,5,5,3,3,1},
 {9,7,7,5,5,3,3,1,1,0},
 {9,7,6,5,5,3,2,1,1,0},
 {7,5,5,4,3,1,1,0,0,0},
 {7,5,5,3,3,1,1,0,0,0},
 {5,3,3,1,1,0,0,0,0,0},
 }

 \topsidex{3}{-1}{-7}

 \topsidex{3}{0}{-7}

 \topsidex{1}{1}{-7}
 \topsidex{2}{1}{-7}
 \topsidex{3}{1}{-7}

 \topsidex{1}{2}{-7}
 \topsidex{2}{2}{-7}
 \topsidex{3}{2}{-7}

 \topsidex{-1}{3}{-7}
 \topsidex{0}{3}{-7}
 \topsidex{1}{3}{-7}
 \topsidex{2}{3}{-7}
 \topsidex{3}{3}{-7}

 \leftsidex{-9}{1}{-3}

 \leftsidex{-9}{1}{-2}

 \leftsidex{-9}{-1}{-1}
 \leftsidex{-9}{0}{-1}
 \leftsidex{-9}{1}{-1}

 \leftsidex{-9}{-1}{0}
 \leftsidex{-9}{0}{0}
 \leftsidex{-9}{1}{0}

 \leftsidex{-9}{-3}{1}
 \leftsidex{-9}{-2}{1}
 \leftsidex{-9}{-1}{1}
 \leftsidex{-9}{0}{1}
 \leftsidex{-9}{1}{1}

 \rightsidex{1}{-9}{1}
 \rightsidex{0}{-9}{1}
 \rightsidex{-1}{-9}{1}
 \rightsidex{-2}{-9}{1}
 \rightsidex{-3}{-9}{1}

 \rightsidex{1}{-9}{0}
 \rightsidex{0}{-9}{0}
 \rightsidex{-1}{-9}{0}

 \rightsidex{1}{-9}{-1}
 \rightsidex{0}{-9}{-1}
 \rightsidex{-1}{-9}{-1}

 \rightsidex{1}{-9}{-2}

 \rightsidex{1}{-9}{-3}

 \flatstep{3}{3}{-7}{red}

 \upstep{3}{3}{-7}{red}
 \upstep{2}{3}{-7}{red}
 \upstep{1}{3}{-7}{red}
 \upstep{0}{3}{-7}{red}
 \upstep{-1}{3}{-7}{red}

 \backstep{-2}{3}{-6}{red}
 \backstep{-2}{2}{-5}{red}
 \backstep{-2}{1}{-4}{red}
 \backstep{-2}{0}{-3}{red}
 \backstep{-2}{-1}{-2}{red}

 \flatstep{-2}{3}{-6}{red}
 \flatstep{-2}{2}{-5}{red}
 \flatstep{-2}{1}{-4}{red}
 \flatstep{-2}{0}{-3}{red}
 \flatstep{-2}{-1}{-2}{red}

 \backstep{-1}{-1}{-2}{red}
 \backstep{-1}{-1}{-1}{red}

 \upstep{0}{-1}{-3}{red}
 \flatstep{0}{0}{-3}{red}

 \backstep{0}{0}{-3}{red}

 \upstep{0}{-1}{-5}{red}

 \backstep{0}{-1}{-5}{red}

 \flatstep{0}{0}{-6}{red}

 \backstep{0}{0}{-6}{red}

 \upstep{1}{0}{-7}{red}
 \upstep{1}{0}{-8}{red}
 \upstep{1}{0}{-9}{red}

 \flatstep{0}{0}{-9}{red}
 \flatstep{0}{0}{-8}{red}

 \end{scope}

 \node at (1.75,-2.5) {\small (a)};

 \begin{scope}[shift={(4,0)},scale=.33]

 \planepartition{{3,3,1,1,0},{0,2,1,1,0}}
 \topsidex{0}{0}{-4}
 \topsidex{0}{1}{-4}

 \topsidex{-1}{-1}{-4}
 \topsidex{-1}{0}{-4}
 \topsidex{-1}{1}{-4}

 \topsidex{-2}{1}{-4}
 \topsidex{-3}{1}{-4}
 \topsidex{1}{1}{-4}

 \leftsidex{-5}{0}{-4}
 \leftsidex{-5}{-1}{-3}
 \leftsidex{-5}{-2}{-2}
 \leftsidex{-5}{-3}{-1}
 \leftsidex{-5}{-4}{0}

 \leftsidex{-5}{-2}{-3}
 \leftsidex{-5}{-4}{-1}

 \end{scope}

 \node at (4.75,-2.5) {\small (b)};

 \node at (8,.25) { \young(1,11,111,1223,12244) };

 \node at (8,-2.5) {\small (c)};

 \end{tikzpicture}

\end{center}

\caption{(a) A TSSCPP of order 5 along with its (b) fundamental domain,  and (c) magog triangle.}

\label{fig:tsscpp}

\end{figure}

In summary, a TSSCPP is determined by its fundamental domain, which in turn can be represented as a magog triangle. Therefore we treat these combinatorial families as equivalent. Our mappings below will use a lattice path representation of the fundamental domain of a TSSCPP.

\subsection{Non-crossing Lattice Paths}
\label{sec:abt-lattice-paths}

We use  lattice paths to show that ABTs are in bijection with TSSCPPs.   

Let us consider two equivalent lattice path patterns. The first is the classic non-intersecting family introduced by Doran \cite{doran}. The second is the non-crossing family  which we use  in the sections that follow. Non-intersecting paths cannot touch in any manner, whereas non-crossing paths are allowed to share points or steps. 
An example of each is shown in Figure \ref{fig:lattice-path}.

\begin{definition}[\cite{doran}]
\label{def:nilp} A \emph{nest of 
non-intersecting lattice paths} (NILP) of size $n$ is a sequence of non-intersecting paths $P_1, P_2, \ldots, P_{n}$ where path $P_i$ starts at $(2i,i)$ and ends at a point $(j,0)$ where $i \leq j \leq 2i$, taking only south  steps $(0,-1)$ and southwest steps $(-1, -1)$.
\end{definition}

\begin{prop}[\cite{doran}, Theorem 1.2]
\label{prop:tsscpp-non-intersecting}
TSSCPPs in a $2n  \times 2n \times 2n$ box are in bijection with NILPs of size $n-1$. 
\hfill $\Box$
\end{prop}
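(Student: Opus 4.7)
The plan is to construct the bijection explicitly using level sets of the magog triangle. Recall from Section \ref{sec:tsscpp} that a TSSCPP in a $2n \times 2n \times 2n$ box is equivalent to a magog triangle $M(i,j)$ for $1 \leq j \leq i \leq n$, with $1 \leq M(i,j) \leq j$ and rows and columns both weakly increasing. Given such a triangle, I would produce one lattice path per threshold value: for each $k \in \{1,\ldots,n-1\}$, the path $P_k$ traces the boundary between the lower-left staircase region $\{M(i,j) \leq k\}$ and its complement $\{M(i,j) > k\}$, drawn in a coordinate system that places row $i$ of the magog triangle at height $i$.

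Because both rows and columns of $M$ are weakly increasing, the region $\{M \leq k\}$ is an order ideal whose boundary, read top-to-bottom, consists entirely of south $(0,-1)$ and southwest $(-1,-1)$ steps — precisely the step set of Definition \ref{def:nilp}. The starting point $(2k,k)$ is forced because row $k$ satisfies $M(k,j) \leq j \leq k$, so the entire row lies in $\{M \leq k\}$ and the boundary must emerge from the right end of row $k$. Symmetrically, the endpoint $(j,0)$ in the range $k \leq j \leq 2k$ records how far the level set $\{M \leq k\}$ extends along the bottom row, with the upper bound $2k$ enforced by the staircase constraint $M(i,j) \leq j$.

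The non-intersection property is the crux. Since the level sets are nested, $\{M \leq k\} \subseteq \{M \leq k+1\}$, the path $P_{k+1}$ lies weakly to the upper-right of $P_k$; the \emph{strict} separation at every lattice point would come from a careful analysis showing that if $P_k$ and $P_{k+1}$ shared a point, then a magog-triangle entry would have to exceed its column index, contradicting $M(i,j) \leq j$. For the inverse map, given an NILP $(P_1,\ldots,P_{n-1})$, I would reconstruct $M(i,j)$ as $1$ plus the number of paths passing strictly to the lower-left of the cell $(i,j)$, and then verify the magog conditions by reading off the step types of each $P_k$.

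The main obstacle I anticipate is fixing the exact coordinate conventions that align lattice points of the paths with the corners of the cells in the magog triangle, so that the step-type constraints correspond cleanly to the two monotonicity conditions and the bound $M(i,j) \leq j$. Once these conventions are pinned down, the remaining verifications are essentially routine case checks on the boundary between two consecutive level sets. Since this is the classical theorem of Doran \cite{doran}, I would follow his framing for the geometric setup, using the magog-triangle perspective from Section \ref{sec:tsscpp} to shorten the bookkeeping.
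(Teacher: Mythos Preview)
The paper does not actually prove this proposition: it is stated with a citation to Doran and closed immediately with a $\Box$, so there is no in-paper argument to compare against. Your sketch therefore goes beyond what the paper provides.

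That said, your level-set approach via the magog triangle is the right idea and is essentially a repackaging of Doran's original argument, which traces the boundaries of the horizontal layers of the fundamental domain (as depicted in Figure~\ref{fig:lattice-path}). The one place where your outline is thin is exactly where you flag it: the \emph{strict} non-intersection of $P_k$ and $P_{k+1}$. Nestedness of the level sets $\{M \leq k\} \subseteq \{M \leq k+1\}$ only gives that $P_{k+1}$ is weakly northeast of $P_k$; to upgrade this to strict separation you need to use the offset in starting points, $(2k,k)$ versus $(2(k+1),k+1)$, together with the bound $M(i,j) \leq j$ to rule out any shared lattice point. Your proposed contradiction (``a magog entry would have to exceed its column index'') is the correct mechanism, but it deserves an explicit case analysis rather than a one-line assertion. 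Once that is nailed down, the rest is bookkeeping, as you say.
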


Striker \cite{striker2018} introduces a binary family of triangles to encode NILPs. 

\begin{definition}[\cite{striker2018}]
\label{def:tsscpp-boolean}
A TSSCPP Boolean triangle of order $n$ is a triangular binary array $B(r,s)$ for $1 \leq r \leq n-1$ and $n-r \leq s \leq n-1$ which satisfies the column compatibility constraint
\begin{equation}
\label{eqn:tsscpp-boolean}
1+\sum_{r=t+1}^{s}  B(r,n-t-1) \geq \sum_{r=t}^{s} B(r,n-t) \quad \mbox{for} \quad  t+1 \leq s \leq n-1.
\end{equation}
\end{definition}

Note that TSSCPP Boolean triangles of order $n$ have $n-1$ rows.

\begin{prop}[\cite{striker2018}, Proposition 2.13]
\label{prop:tsscpp-boolean}
TSSCPP Boolean triangles of order $n$ are in bijection with NILPs of order $n$, and therefore also in bijection with TSSCPP of order $n$. \hfill $\Box$
\end{prop}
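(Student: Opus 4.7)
The plan is to construct an explicit bijection between NILPs of size $n-1$ and TSSCPP Boolean triangles of order $n$ by recording at each entry the type of step taken by one specific path at one specific height. Given a NILP $P_1, P_2, \ldots, P_{n-1}$ from Definition~\ref{def:nilp}, I would set $B(r,s) = 1$ exactly when the step of path $P_s$ descending from height $n-r$ to height $n-r-1$ is a southwest step, and $B(r,s) = 0$ when it is a south step. Such an entry exists if and only if $P_s$ reaches height $n-r$, i.e.\ $s \geq n-r$, so the index set for row $r$ is $\{n-r, n-r+1, \ldots, n-1\}$, matching Definition~\ref{def:tsscpp-boolean}. Note that under this identification it is column $s$---rather than row $s$---of the triangle that encodes the step sequence of path $P_s$, while row $r$ aggregates the behavior of every path at the single height $n-r$.

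The main computation is to show that each column compatibility inequality \eqref{eqn:tsscpp-boolean} is equivalent to the pairwise non-intersection of two consecutive paths at one particular height. From the definitions, the $x$-coordinate of path $P_r$ at height $y < r$ equals $x_r(y) = 2r - \sum_{r'=n-r}^{n-y-1} B(r', r)$. Substituting this into the strict non-crossing inequality $x_{n-t}(n-s-1) \geq x_{n-t-1}(n-s-1) + 1$ and simplifying using $2(n-t) - 2(n-t-1) = 2$ yields exactly
$$
1 + \sum_{r=t+1}^{s} B(r, n-t-1) \geq \sum_{r=t}^{s} B(r, n-t).
$$
Letting $t$ range over $1, \ldots, n-2$ and $s$ over $t+1, \ldots, n-1$ covers every consecutive pair of paths at every common height, and since monotone south/southwest paths cannot cross between adjacent heights without meeting at a lattice point, non-intersection of every consecutive pair at every common height is equivalent to non-intersection of the full NILP.

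The main obstacle is choosing the correct encoding: the naive row-based identification ``row $r$ holds the step sequence of path $P_r$'' does not produce the claimed bijection, as one can already verify at $n = 3$, where this alternative labels an intersecting configuration legal and a genuinely non-intersecting one illegal. Selecting instead the column-based encoding above---where each row aggregates data across all paths present at a single height---aligns the $+1$ on the left-hand side of the inequality with the constant initial horizontal offset of $2$ between consecutive path starting points, compensated by the $-1$ from strict non-crossing. Once this alignment is set, the inverse map is transparent: from a valid Boolean triangle one recovers each path $P_s$ by reading down column $s$, and the compatibility inequalities supply precisely the spacing that keeps the resulting paths pairwise disjoint. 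Composing this bijection with Proposition~\ref{prop:tsscpp-non-intersecting} then delivers the chain NILPs $\leftrightarrow$ Boolean triangles $\leftrightarrow$ TSSCPPs.
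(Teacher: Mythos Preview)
Your argument is correct. Note, however, that the paper does not actually prove this proposition: it is quoted from Striker~\cite{striker2018} and marked with a $\Box$, so there is no ``paper's own proof'' to compare against beyond the surrounding commentary. That commentary does confirm your key structural choice: just after the proposition the paper states that ``these paths are encoded in the \emph{columns} of the corresponding TSSCPP Boolean triangle,'' which is exactly your column-based encoding $B(r,s)=1$ iff the step of $P_s$ from height $n-r$ to $n-r-1$ is southwest. Your computation
\[
x_{n-t}(n-s-1)\geq x_{n-t-1}(n-s-1)+1
\quad\Longleftrightarrow\quad
1+\sum_{r=t+1}^{s} B(r,n-t-1)\geq \sum_{r=t}^{s} B(r,n-t)
\]
is the standard unwinding, and your reduction to consecutive pairs at each common height is justified since south/southwest paths can only meet at lattice points and the strict ordering $x_{i+1}(y)>x_i(y)$ is transitive. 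One small notational wrinkle: you use $r$ both as a path index in $x_r(y)$ and as the row index of $B$, which is momentarily confusing; renaming the path index (as you implicitly do when substituting $n-t$ and $n-t-1$) would make the write-up cleaner. Also be aware that Figure~\ref{fig:lattice-path} labels the NILP paths in the reverse order from Definition~\ref{def:nilp}, so if you cross-check your formula against that picture you must swap $P_i\leftrightarrow P_{n-i}$.
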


An example of this mapping for a TSSCPP of order $6$ is shown in the right half of Figure \ref{fig:lattice-path}. We slice the fundamental domain into horizontal layers. Tracing the boundary path of each level results in five lattice paths $P_1, P_2, P_3, P_4, P_5$.
The isometric view of these boundary paths is a NILP, rotated by $\pi/3$. These paths are encoded in the columns of the corresponding TSSCPP Boolean triangle.

\ytableausetup{boxsize=1.1em}

\begin{figure}[ht]
    \centering
\begin{tikzpicture}

\begin{scope}[shift={(0,2.5)}, scale=.33]
\planepartitionx{{4,4,3,1,1},{0,2,1,1,1},{0,0,1,1,0}}
\topsidex{0}{0}{-4}
\topsidex{0}{1}{-4}
\topsidex{-1}{1}{-4}

\topsidex{0}{0}{-5}
\topsidex{0}{1}{-5}
\topsidex{-1}{1}{-5}
\topsidex{-2}{1}{-5}
\topsidex{-3}{1}{-5}
\topsidex{-4}{1}{-5}
\topsidex{0}{0}{-6}
\leftsidex{-5}{1}{-4}
\leftsidex{-5}{0}{-3}
\leftsidex{-5}{-1}{-2}
\leftsidex{-5}{-2}{-1}
\leftsidex{-5}{-3}{0}
\leftsidex{-5}{-4}{1}

\leftsidex{-5}{-1}{-3}
\leftsidex{-5}{-4}{0}

\upstep{-3}{1}{-3}{red}
\upstep{-2}{1}{-3}{red}
\flatstep{-2}{1}{-3}{red}
\upstep{-1}{0}{-3}{red}
\flatstep{-1}{0}{-3}{red}

\flatstep{-3}{1}{-1}{orange}
\upstep{-2}{0}{-1}{orange}
\flatstep{-2}{0}{-1}{orange}
\upstep{-1}{-1}{-1}{orange}

\upstep{-3}{-1}{-1}{green}
\flatstep{-3}{-1}{-1}{green}
\flatstep{-2}{-1}{0}{green}

\upstep{-2}{-1}{1}{blue}
\flatstep{-2}{-1}{1}{blue}

\flatstep{-2}{-1}{3}{violet}

\startdot{-4}{1}{-3}{red}
\startdot{-3}{1}{-1}{orange}
\startdot{-4}{-1}{-1}{green}
\startdot{-3}{-1}{1}{blue}
\startdot{-2}{-1}{3}{violet}

\node at (1.5, -7.5) {\small fundamental domain};

\end{scope}

\begin{scope}[shift={(3,4)}, scale=0.4]

\node[above] at (10,5) {\scriptsize $P_1$};
\node[above] at (8,4) {\scriptsize $P_2$};
\node[above] at (6,3) {\scriptsize $P_3$};
\node[above] at (4,2) {\scriptsize $P_4$};
\node[above] at (2,1) {\scriptsize $P_5$};

\draw[red, ultra thick] (10,5) -- (10, 4) -- (10,3) --  (9,2) -- (9,1) -- (8,0);
\draw[orange, ultra thick] (8,4) -- (7,3) --  (7,2) -- (6,1) -- (6,0);
\draw[green, ultra thick] (6,3) --  (6,2) -- (5,1) -- (4,0);
\draw[blue, ultra thick] (4,2) -- (4,1) -- (3,0);
\draw[violet, ultra thick] (2,1) -- (1,0);

\foreach \i in {1, 2, 3, 4, 5 }
{
\foreach \j in  {0 , ..., \i}
{
\fill (2*\i , \j) circle (2 pt);
}
\draw[thick, fill=white] (2*\i , \i) circle (3 pt);
}

\foreach \i in {1, 2, 3, 4, 5 }
{
\foreach \j in  {1 , ..., \i}
{
\fill (2*\i -1 , \j - 1) circle (2 pt);
}
}

\node at (5, -2) {non-intersecting lattice paths};

\end{scope}

\begin{scope}[shift={(-5,4)}, scale=0.4]

\draw[dotted] (0,0) -- (5,0) -- (5,1) -- (4,1) -- (4,2) -- (3,2) -- (3,3) -- (2,3) -- (2,4) -- (1,4) -- (1,5) -- (0,5) -- cycle;
\draw[red, ultra thick] (0.0,0) -- (0.0,2) -- (1,2)  -- (1,3) -- (2,3);
\draw[orange, ultra thick] (1,0.0) -- (1.85,0.0) -- (1.85,1.15) -- (3,1.15) -- (3,2);
\draw[green, ultra thick] (2.0,0) -- (2.0,1) -- (4,1);
\draw[blue, ultra thick] (3,0) -- (3,0.85) -- (4,0.85);
\draw[violet, ultra thick] (4,0.0) -- (5,0.0);

\draw[thick, fill=white] (0,0) circle  (3pt);
\draw[thick, fill=white] (1.0,0) circle  (3pt);
\draw[thick, fill=white] (2,0) circle  (3pt);
\draw[thick, fill=white]  (3,0) circle  (3pt);
\draw[thick, fill=white]  (4.0,0) circle  (3pt);

\node[below] at (-.4,0) {\scriptsize $Q_5$};
\node[below] at (.8,0) {\scriptsize $Q_4$};
\node[below] at (2,0) {\scriptsize $Q_3$};
\node[below] at (3.2,0) {\scriptsize $Q_2$};
\node[below] at (4.4,0) {\scriptsize $Q_1$};

\fill (0, 0) circle (2pt);

\foreach \x in {0,1, ..., 5} {
\foreach \y in {0, ..., \x} {
    \fill (5- \x,  \y) circle (2pt);
}
}

\node at (3, -3) {non-crossing lattice paths};

\end{scope}

\begin{scope}[shift={(-4,0)}]
\node  (abt) at (0,0) {\young(0,01,001,1010,01011)};
\node  at (0,-2) {approval ballot triangle};

\node[above] at (-.35,.7) {\scriptsize $Q_1$};
\node[above] at (0.05,.25) {\scriptsize $Q_2$};
\node[above] at (.5,-.2) {\scriptsize $Q_3$};
\node[above] at (0.95,-.65) {\scriptsize $Q_4$};
\node[above] at (1.4,-1.05) {\scriptsize $Q_5$};

\end{scope}

\begin{scope}[shift={(5,0)}]
\node (tsscppbool) at (0,0) {$\begin{ytableau}
\none & \none & \none & \none & 0 \\
\none & \none & \none & 1& 0 \\
\none & \none & 0 & 0 & 1 \\
\none & 0 & 1 & 1 & 0 \\
1 & 1 & 1 & 0 & 1 \\
\end{ytableau}$};
\node  at (0,-2) {TSSCPP Boolean triangle};

\node[below] at (-1,-.175) {\scriptsize $P_5$};
\node[below] at (-.55, .275) {\scriptsize $P_4$};
\node[below] at (-0.1, .7) {\scriptsize $P_3$};
\node[below] at (.35,1.15) {\scriptsize $P_2$};
\node[below] at (.8,1.6) {\scriptsize $P_1$};

\end{scope}

\end{tikzpicture}
    \caption{The fundamental domain of a TSSCPP of order 6 and its corresponding lattice path and  triangle representations.}
    
    \label{fig:lattice-path}
\end{figure}
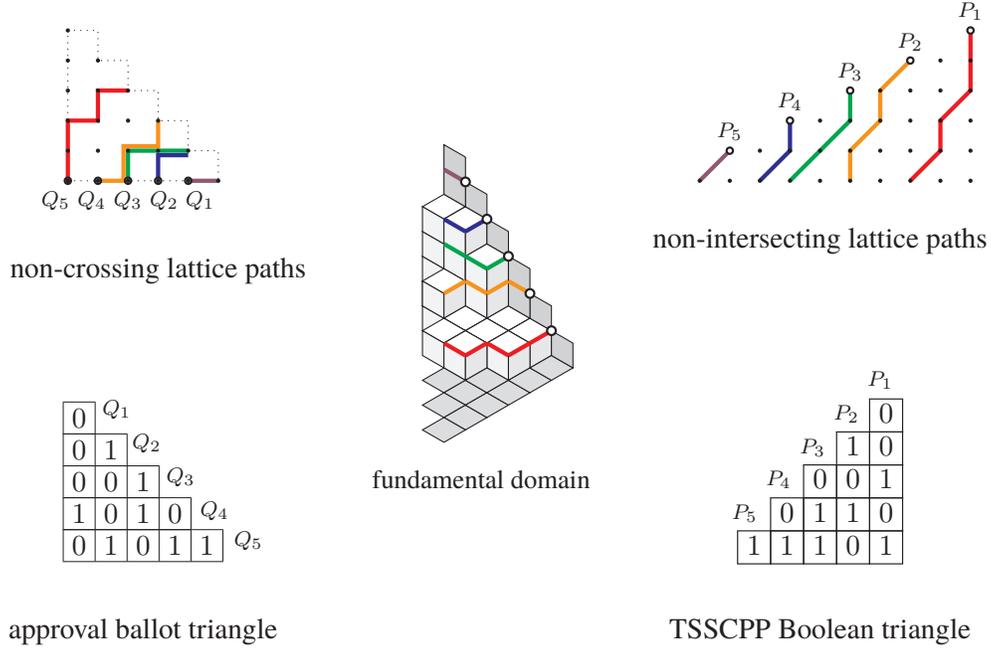

We obtain our second lattice path family using the top-down view of the fundamental domain (along with a rotation). From this vantage point, the boundary paths for the layers can touch at vertices and along edges, but they cannot cross. This family is encoded in the rows of an ABT of size 5, as shown by the paths $Q_1, Q_2, Q_3, Q_4, Q_5$ in the left hand side of Figure \ref{fig:lattice-path}.

\begin{definition}
\label{def:nclp}
Consider a sequence of paths $Q_1, \ldots, Q_{n}$ where path
$Q_i$ starts at $(n+1-i,1)$ and ends at the diagonal $D=\{ (n+2-j,j) : 1 \leq j \leq n+1 \}$, taking only  east $(1,0)$ steps and north $(0,1)$ steps. 
This sequence is a \emph{nest of non-crossing lattice paths} (NCLP) of size $n$ when the paths are pairwise non-crossing. In other words, for
$1 \leq i < j \leq n$, each lattice point on path $Q_i$ is  weakly southeast of each lattice point on path $Q_j$.
\end{definition}

\begin{proposition}
\label{prop:abt-noncrossing}
ABTs of order $n$ are in bijection with NCLPs of order $n$. 
\end{proposition}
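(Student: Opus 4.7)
The plan is to build an explicit row-by-row bijection. Given $A \in \mathcal{A}_n$, define $Q_i$ to start at $(n+1-i, 1)$ and take as its $k$-th step, for $k = 1, \ldots, i$, a north step if $A(i, i+1-k) = 1$ and an east step if $A(i, i+1-k) = 0$; equivalently, one reads row $i$ of $A$ from right to left, interpreting $1$ as north and $0$ as east. Since each east or north step increases the anti-diagonal coordinate $x+y$ by exactly one, and $Q_i$ starts with $x+y = n+2-i$, the endpoint after all $i$ steps satisfies $x+y = n+2$ and hence lies on the diagonal $D$. So each $Q_i$ is a valid path of the required form.

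The crux is showing that the row compatibility condition \eqref{eqn:abt-row} is equivalent to the pairwise non-crossing condition. It suffices to verify this between consecutive paths $Q_i$ and $Q_{i+1}$, since non-crossing is transitive in this monotone setting. The anti-diagonal observation above gives a clean parametrization: the point of $Q_i$ after $m$ steps and the point of $Q_{i+1}$ after $m+1$ steps both lie on the common anti-diagonal $x + y = n+2-i+m$ for each $m = 0, 1, \ldots, i$, and since an east/north path meets each anti-diagonal in at most one lattice point, every anti-diagonal visited by both paths is captured by such an $m$. The non-crossing condition (with $Q_{i+1}$ weakly northwest of $Q_i$) therefore reduces to the $i+1$ comparisons: for each such $m$, the $y$-coordinate of $Q_{i+1}$ after $m+1$ steps must weakly exceed the $y$-coordinate of $Q_i$ after $m$ steps.

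Writing $N_Q(m)$ for the number of north steps among the first $m$ steps of a path $Q$, the $y$-coordinate of $Q$ after $m$ steps equals $1 + N_Q(m)$, so the non-crossing condition becomes $N_{Q_i}(m) \le N_{Q_{i+1}}(m+1)$ for $m = 0, 1, \ldots, i$, with the $m=0$ case automatic. By the right-to-left reading convention, $N_{Q_i}(m)$ equals the number of $1$'s among the last $m$ entries of row $i$, so setting $j = i+1-m$ we obtain $N_{Q_i}(m) = \sum_{k=j}^{i} A(i,k)$ and $N_{Q_{i+1}}(m+1) = \sum_{k=j}^{i+1} A(i+1,k)$. The resulting inequalities for $m = 1, \ldots, i$ (equivalently $j = i, i-1, \ldots, 1$) are exactly the row compatibility constraints \eqref{eqn:abt-row}.

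The inverse map is transparent: given an NCLP $(Q_1, \ldots, Q_n)$, each $Q_i$ has exactly $i$ steps (its start on $y = 1$ and its end on $D$ differ by $i$ in $x+y$), so encoding these steps as a binary sequence ($1$ for north, $0$ for east) and reversing gives a valid row $i$ of $A$. The row compatibility for the resulting triangle follows from the same equivalence. The only place where anything delicate happens is the anti-diagonal observation, which converts the two-dimensional non-crossing condition into a one-dimensional comparison of $y$-coordinates that lines up precisely with \eqref{eqn:abt-row}.
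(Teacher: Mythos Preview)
Your proof is correct and follows essentially the same approach as the paper: both encode row $i$ of $A$, read right-to-left, as the step sequence of $Q_i$ starting at $(n+1-i,1)$, and both verify non-crossing only for adjacent paths by translating the step counts back into the row compatibility inequality \eqref{eqn:abt-row}. Your use of the anti-diagonal parametrization to line up the $m$th point of $Q_i$ with the $(m{+}1)$st point of $Q_{i+1}$ makes the equivalence (rather than merely the forward implication) explicit, but this is a presentational refinement rather than a different argument.
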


\begin{proof}
Let $A \in \abt{n}$. Row $i$ of  $A$ encodes the path $Q_i$ \emph{in reverse}, so we read the row from right to left. A one entry corresponds to a north step and a zero entry corresponds to an east step. We take a total of $i$ steps from $(n+1-i,1)$, which ends at a point in the diagonal set $D$. 

We now show that adjacent paths do not cross. Consider paths $Q_{i}$ and $Q_{i+1}.$ Suppose that path $Q_{i+1}$ takes $s$ east steps during its  first $r+1$ steps, so that
$\sum_{k=i - r}^{i+1}  A(i+1,k) = r+1 - s$. By condition \eqref{eqn:abt-row},  we have $ \sum_{k=i-r}^{i} A(i,k) \leq r+1-s$. Therefore path $Q_{i}$ must take at least $s-1$ east steps in its first $r$ steps. 
Meanwhile, the starting point $(n+1-i,1)$ of path $Q_{i}$ is one step to the east of the starting point $(n-i,1)$ of $Q_{i+1}$. As a consequence, these two paths can overlap, but they cannot cross. 
\end{proof}

It is clear from Figure \ref{fig:lattice-path} that these lattice paths and triangles are in bijection: the non-intersecting path $P_i$ corresponds to non-crossing path $Q_{n-i}$. The paths are encoded by the \emph{columns} of a TSSCPP Boolean triangle and by the \emph{rows} of an ABT. We also note that these binary triangles use \emph{opposite} encodings of zeros and ones. 

It may be tempting to claim that the bijection between TSSCPPs and NCLPs is ``obvious.'' But Doran provides a rigorous proof of the bijection between  TSSCPPs and NILPs. We follow suit, and give an explicit mapping between TSSCPP Boolean triangles and ABTs. 

\begin{prop}
\label{prop:abt-tsscpp-boolean}
ABTs of size $n-1$ are in bijection with TSSCPP Boolean triangles of order $n$.
\end{prop}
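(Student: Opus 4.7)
My plan is to construct an explicit bit-level bijection between the two sets of triangles, guided by the picture in Figure \ref{fig:lattice-path}. The key observation from that figure is that the row encoding of an ABT and the column encoding of a TSSCPP Boolean triangle represent the same nest of paths (up to rotation) using opposite conventions for ``step east'' versus ``step north.'' Concretely, reading row $i$ of the ABT from right to left and then flipping each bit should produce column $i$ of the TSSCPP Boolean triangle, read top to bottom.

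Translating this into coordinates, I would define $\Phi \colon \abt{n-1} \to \{\text{TSSCPP Boolean triangles of order } n\}$ by
$$B(r,s) = 1 - A(s, n-r) \quad \text{for } 1 \leq r \leq n-1 \text{ and } n-r \leq s \leq n-1.$$
First I would check that the index ranges match: for a fixed column $s$ of $B$, the row index $r$ runs over $\{n-s,\ldots,n-1\}$, a set of size $s$, and the bits pulled from $A$ are $A(s,1),\ldots,A(s,s)$ taken in reverse order, which is exactly row $s$ of $A$. Likewise, a fixed row $i = s$ of $A$ has indices $j = 1,\ldots,i$, and $n-j$ hits each valid $r$ in column $s$ of $B$. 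So $\Phi$ is a well-defined involution-like map on triangular arrays, and its inverse $A(i,j) = 1 - B(n-j, i)$ is immediate.

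The content of the proposition is then that $A$ satisfies the ABT row compatibility condition \eqref{eqn:abt-row} if and only if $B = \Phi(A)$ satisfies the TSSCPP Boolean column compatibility condition \eqref{eqn:tsscpp-boolean}. To verify this, I would substitute $B(r,s) = 1 - A(s, n-r)$ into
$$1 + \sum_{r=t+1}^{s} B(r, n-t-1) \geq \sum_{r=t}^{s} B(r, n-t),$$
using $|\{t+1,\ldots,s\}| = s-t$ and $|\{t,\ldots,s\}| = s-t+1$ so that the constant terms cancel as $s-t+1$ on both sides, and then reindex with $k = n-r$ to turn the remaining $A$-sums into sums over $k$ of a single row of $A$. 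After simplification, the inequality becomes
$$\sum_{k=n-s}^{n-t-1} A(n-t-1, k) \leq \sum_{k=n-s}^{n-t} A(n-t, k).$$
Setting $i = n-t-1$ and $j = n-s$, this is precisely condition \eqref{eqn:abt-row}, and the parameter ranges $1 \leq t \leq n-2$, $t+1 \leq s \leq n-1$ map bijectively onto $1 \leq j \leq i \leq n-2$.

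The main obstacle is purely bookkeeping: three conventions (row vs.\ column, ascending vs.\ descending, zero vs.\ one) are reversed simultaneously, and one must be careful that the algebraic cancellation of $1+(s-t)$ against $(s-t+1)$ flips the direction of the inequality in exactly the right way so the ``$\geq$'' in the Boolean condition becomes the ``$\leq$'' in the ABT condition. Once those translations are written out, the equivalence of conditions is a one-line verification and the bijection follows.
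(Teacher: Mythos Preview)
Your proposal is correct and is essentially identical to the paper's own proof: both define the map by $B(r,s)=1-A(s,n-r)$ (equivalently $A(i,j)=1-B(n-j,i)$), substitute into \eqref{eqn:tsscpp-boolean}, cancel the $(s-t+1)$ constant terms, and reindex with $i=n-t-1$, $j=n-s$, $k=n-r$ to recover \eqref{eqn:abt-row}. The only difference is cosmetic---you spell out the parameter-range check a bit more explicitly than the paper does.
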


\begin{proof}
Let $B$ be a TSSCPP Boolean triangle of order $n$ (which has $n-1$ rows) and let $A$ be the triangular array obtained by swapping 0's and 1's and then rotating clockwise by $\pi/2$. More formally,
\begin{align*}
A(i,j) &= 1 - B(n-j,i)  \quad \mbox{for}
\quad 1 \leq j \leq i \leq n-1, \\
B(j,k) &= 1 - A(k,n-j)  \quad \mbox{for}
\quad 1 \leq j \leq n-k \leq n-1.
\end{align*}
The column compatibility  equation \eqref{eqn:tsscpp-boolean} becomes the row compatibility  equation \eqref{eqn:abt-row}:
\begin{align*}
1+\sum_{r=t+1}^{s}  B(r,n-t-1) & \geq \sum_{r=t}^{s} B(r,n-t) \\
1+\sum_{r=t+1}^{s}  (1 - A(n-t-1,n-r)) & \geq \sum_{r=t}^{s} (1- A(n-t,n-r)) \\
\sum_{r=t+1}^{s}  A(n-t-1,n-r) & \leq \sum_{r=t}^{s} A(n-t,n-r) \\
\sum_{k=j}^{i}  A(i,k) & \leq \sum_{k=j}^{i+1} A(i+1,k) 
\end{align*}
where we successively change variables  $i=n-t-1$, then $k=n-r$ and finally  $j=n-s$.
Therefore, the image of this mapping is an ABT of size $n-1$. Reversing the argument confirms that the mapping is a bijection.
\end{proof}

We can now surmise that ABTs  are in bijection with TSSCPPs.

\begin{proof}[Proof of Theorem \ref{thm:abt-tsscpp}]
Follows directly from Propositions \ref{prop:tsscpp-boolean} and \ref{prop:abt-tsscpp-boolean}. 
\end{proof}


\section{Approval Ballot Hypertriangles}

\label{sec:ssb}

We prove Theorem \ref{thm:hypertriangle}:  a strict-sense ballot (SSB) can be decomposed into an approval ballot hypertriangle (ABH), which is a list  of compatible approval ballot triangles.

Recall that a strict-sense ballot has $n$ candidates where  candidate $k$ receives $n+1-k$ votes. We must order these votes so that  candidate $k$ always leads candidate $k+1$ for $1 \leq k \leq  n- 1$ during the vote count. 
SSBs of order $n$  are in bijection with shifted standard Young tableaux (shifted SYT) of staircase shape $(n,n-1,\ldots,1)$,
as shown in Figure \ref{fig:ssb}. We must use shifted SYT, rather than regular SYT,  to keep candidate $k$  strictly ahead of candidate $k+1$.
In a shifted staircase SYT, the first box in row $i > 1$ is located below the second box of row $i-1$. The integers $1,2, \ldots ,n(n+1)/2$ are arranged in the boxes so that the  rows and the columns are both increasing. The bijection from strict-sense ballots to  shifted SYT is straight-forward: row $i$ of the SYT contains the indices $S_i = \{ k :  s_k = i \}$ of the votes for candidate $i$, listed in increasing order. 

In order to match the orientation of ABTs, let us reverse both the rows and columns of our shifted SYT.  The result is a triangular array $T$ of shape $(1,2,\ldots, n)$ where row $i$  lists the vote indices $S_{n+1-i}$ in decreasing order; of course, this  triangle also has decreasing columns.
 For convenience, we will refer to $T$ as a \emph{SSB triangle} to distinguish this layout from a shifted SYT. Figure \ref{fig:ssb} shows  a strict-sense ballot for $7$ candidates and its corresponding shifted SYT and  SSB triangle.

\ytableausetup{boxsize=1.15em}

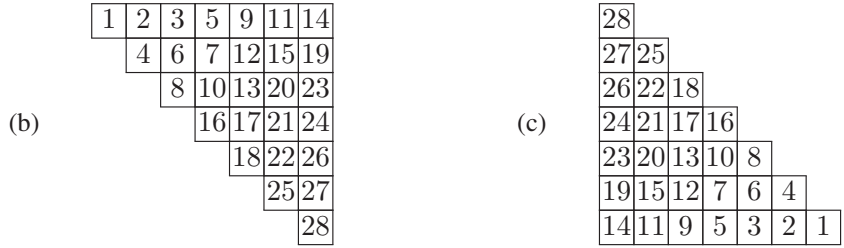
\begin{figure}[ht]
\begin{center}
\begin{tikzpicture}
\node at (0,3) {$(
1,1,1,2,1,
2,2,3,1,3,
1,2,3,1,2,
4,4,5,2,3,
4,5,3,4,6,
5,6,7)$};

\node at (-3.5,0) {
\begin{ytableau}
1 & 2 & 3 & 5 & 9 & 11 & 14 \\
\none & 4 & 6 & 7 & 12 & 15 & 19 \\
\none & \none & 8 & 10 & 13 & 20 & 23 \\
\none & \none & \none & 16 & 17 & 21 & 24 \\
\none & \none & \none & \none & 18 & 22 & 26 \\
\none & \none & \none & \none & \none & 25 & 27 \\
\none & \none & \none & \none & \none & \none & 28
\end{ytableau}
};

\node at (3.25,0) {
\begin{ytableau}
28 \\
27 & 25 \\
26 & 22 & 18 \\
24 & 21 & 17 & 16 \\
23 & 20 & 13 & 10 & 8 \\
19 & 15 & 12 & 7 & 6 & 4 \\
14 & 11  & 9  & 5 & 3 & 2  & 1
\end{ytableau}
};

\node at (-6,3) {\small (a)};
\node at (-6,0) {\small (b)};
\node at (0.75,0) {\small (c)};

\end{tikzpicture}
\end{center}

\caption{(a) A strict-sense ballot for 7 candidates. (b) Its corresponding shifted standard Young tableau of staircase shape. (c) Its corresponding SSB triangle  with decreasing rows and columns. }
\label{fig:ssb}
\end{figure}

We will prove that an SSB triangle of order $n$ corresponds to an approval ballot hypertriangle of size $n-2$ by creating a bijective mapping that turns an SSB triangle into a collection of non-crossing lattice paths that start from a triangular lattice of points. This requires extending Definition \ref{def:nclp} of \emph{non-crossing} to pairs of lattice paths that do not start in the same row, and we do so in Definition \ref{def:nclp-tri} below.

For convenience, we repeat Definition \ref{def:abt-hyper}. An   approval ballot hypertriangle of size $n$ is a sequence of
compatible ABTs $(A_{n}, A_{n-1}, \ldots, A_{1})$ where $A_{\ell} \in \mathcal{A}_{\ell}$. This triangle compatibility depends upon the entries $A_s(t,u)=1$ where $1 \leq u \leq t \leq s$. Each such entry (which corresponds to a north step of a NCLP) forces the triangle compatibility condition
\begin{equation}
\label{eqn:abt-hyper}
A_{s'} (t'-1,[1:t'-1]) \prec A_s (t,[1:t']) \preceq A_{s'}(t',[1:t'])
\end{equation}
where
$$
s' = s - \sum_{j=u}^t A(t,j) \quad \mbox{and} \quad t'=u-1,
$$
with the first inequality holding when $1 < t' < s'$ and the second holding when $1 \leq t' < s'$.

We will prove that the set of ABHs $(A_{n-2}, A_{n-3}, \ldots, A_{1})$ are in bijection with the SSB triangles  for $n$ candidates. 
Figure \ref{fig:abt-hyper} shows the ABH corresponding to the strict-sense ballot in Figure \ref{fig:ssb}. This figure also shows the collection of NCLPs that provide the mapping from the SSB triangle to the list of approval ballot triangles.

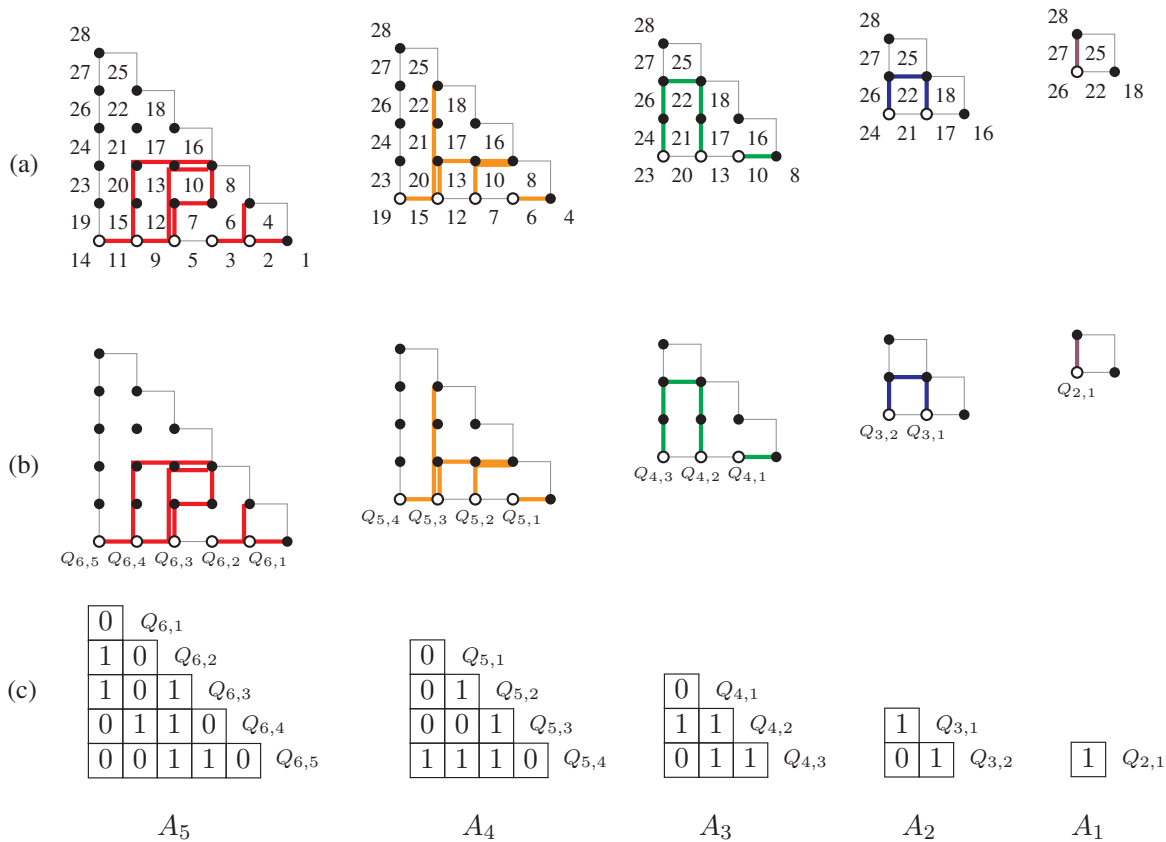
\begin{figure}[ht]

\begin{center}
\begin{tikzpicture}[scale=.5]

\node at (-2,2) {\small (a)};

\begin{scope}[shift={(0,0)}]

\draw[gray] (0,0) -- (5,0) -- (5,1) -- (4,1) -- (4,2) -- (3,2) -- (3,3) -- (2,3) -- (2,4) -- (1,4) -- (1,5) -- (0,5) -- cycle;

\draw[red, ultra thick] (0.0,0) -- (.9,0) -- (.9,2.1)  -- (3,2.1);
\draw[red, ultra thick] (1,0.0) -- (1.85,0.0) -- (1.85,1.9) -- (2.9,1.9);
\draw[red, ultra thick] (2.0,0) -- (2.0,1) -- (3,1) -- (3,2);
\draw[red, ultra thick](3,0) -- (3.85,0) -- (3.85,1);
\draw[red, ultra thick] (4,0.0) -- (5,0.0);

\fill (0, 0) circle (4pt);

\foreach \x in {0,1, ..., 5} {
\foreach \y in {0, ..., \x} {
    \fill (5- \x,  \y) circle (4pt);
}
}

\draw[thick, fill=white] (0,0) circle  (4pt);
\draw[thick, fill=white] (1.0,0) circle  (4pt);
\draw[thick, fill=white] (2,0) circle  (4pt);
\draw[thick, fill=white]  (3,0) circle  (4pt);
\draw[thick, fill=white]  (4.0,0) circle  (4pt);

\node at (-.5, -.5) {\scriptsize 14};
\node at (.5, -.5) {\scriptsize 11};
\node at (1.5, -.5) {\scriptsize 9};
\node at (2.5, -.5) {\scriptsize 5};
\node at (3.5, -.5) {\scriptsize 3};
\node at (4.5, -.5) {\scriptsize 2};
\node at (5.5, -.5) {\scriptsize 1};

\node at (-.5, .5) {\scriptsize 19};
\node at (.5, .5) {\scriptsize 15};
\node at (1.5, .5) {\scriptsize 12};
\node at (2.5, .5) {\scriptsize 7};
\node at (3.5, .5) {\scriptsize 6};
\node at (4.5, .5) {\scriptsize 4};

\node at (-.5, 1.5) {\scriptsize 23};
\node at (.5, 1.5) {\scriptsize 20};
\node at (1.5, 1.5) {\scriptsize 13};
\node at (2.5, 1.5) {\scriptsize 10};
\node at (3.5, 1.5) {\scriptsize 8};

\node at (-.5, 2.5) {\scriptsize 24};
\node at (.5, 2.5) {\scriptsize 21};
\node at (1.5, 2.5) {\scriptsize 17};
\node at (2.5, 2.5) {\scriptsize 16};

\node at (-.5, 3.5) {\scriptsize 26};
\node at (.5, 3.5) {\scriptsize 22};
\node at (1.5, 3.5) {\scriptsize 18};

\node at (-.5, 4.5) {\scriptsize 27};
\node at (.5, 4.5) {\scriptsize 25};

\node at (-.5, 5.5) {\scriptsize 28};
\end{scope}

\begin{scope}[shift={(8,.125)}]

\draw[gray] (0,1) -- (4,1) -- (4,2) -- (3,2) -- (3,3) -- (2,3) -- (2,4) -- (1,4) -- (1,5) -- (0,5) -- cycle;

\draw[orange, ultra thick] (0.0,1) -- (.9,1) -- (.9,4);
\draw[orange, ultra thick] (1.05,1) --  (1.05,2) -- (3,2);
\draw[orange, ultra thick] (2.0,1) -- (2,1.9) -- (2.95,1.9);
\draw[orange, ultra thick] (3,1) -- (4,1);

\foreach \x in {1, ..., 5} {
\foreach \y in {1, ..., \x} {
    \fill (5- \x,  \y) circle (4pt);
}
}

\draw[thick, fill=white] (0,1) circle  (4pt);
\draw[thick, fill=white] (1,1) circle  (4pt);
\draw[thick, fill=white] (2,1) circle  (4pt);
\draw[thick, fill=white]  (3,1) circle  (4pt);

\node at (-.5, .5) {\scriptsize 19};
\node at (.5, .5) {\scriptsize 15};
\node at (1.5, .5) {\scriptsize 12};
\node at (2.5, .5) {\scriptsize 7};
\node at (3.5, .5) {\scriptsize 6};
\node at (4.5, .5) {\scriptsize 4};

\node at (-.5, 1.5) {\scriptsize 23};
\node at (.5, 1.5) {\scriptsize 20};
\node at (1.5, 1.5) {\scriptsize 13};
\node at (2.5, 1.5) {\scriptsize 10};
\node at (3.5, 1.5) {\scriptsize 8};

\node at (-.5, 2.5) {\scriptsize 24};
\node at (.5, 2.5) {\scriptsize 21};
\node at (1.5, 2.5) {\scriptsize 17};
\node at (2.5, 2.5) {\scriptsize 16};

\node at (-.5, 3.5) {\scriptsize 26};
\node at (.5, 3.5) {\scriptsize 22};
\node at (1.5, 3.5) {\scriptsize 18};

\node at (-.5, 4.5) {\scriptsize 27};
\node at (.5, 4.5) {\scriptsize 25};

\node at (-.5, 5.5) {\scriptsize 28};

\end{scope}

\begin{scope}[shift={(15,.25)}]

\draw[gray] (0,2) -- (3,2) -- (3,3) -- (2,3) -- (2,4) -- (1,4) -- (1,5) -- (0,5) -- cycle;
\draw[green, ultra thick] (0.0,2) -- (0,4)  -- (1,4);
\draw[green, ultra thick] (1,2) -- (1,4);
\draw[green, ultra thick] (2.0,2) -- (2.95,2);

\foreach \x in {2, ..., 5} {
\foreach \y in {2, ..., \x} {
    \fill (5- \x,  \y) circle (4pt);
}
}

\draw[thick, fill=white] (0,2) circle  (4pt);
\draw[thick, fill=white] (1,2) circle  (4pt);
\draw[thick, fill=white] (2,2) circle  (4pt);

\node at (-.5, 1.5) {\scriptsize 23};
\node at (.5, 1.5) {\scriptsize 20};
\node at (1.5, 1.5) {\scriptsize 13};
\node at (2.5, 1.5) {\scriptsize 10};
\node at (3.5, 1.5) {\scriptsize 8};

\node at (-.5, 2.5) {\scriptsize 24};
\node at (.5, 2.5) {\scriptsize 21};
\node at (1.5, 2.5) {\scriptsize 17};
\node at (2.5, 2.5) {\scriptsize 16};

\node at (-.5, 3.5) {\scriptsize 26};
\node at (.5, 3.5) {\scriptsize 22};
\node at (1.5, 3.5) {\scriptsize 18};

\node at (-.5, 4.5) {\scriptsize 27};
\node at (.5, 4.5) {\scriptsize 25};

\node at (-.5, 5.5) {\scriptsize 28};

\end{scope}

\begin{scope}[shift={(21,.375)}]

\draw[gray] (0,3) -- (2,3) -- (2,4) -- (1,4) -- (1,5) -- (0,5) -- cycle;
\draw[blue, ultra thick] (0.0,3) -- (0,4)  -- (1,4);
\draw[blue, ultra thick] (1,3) -- (1,4);

\foreach \x in {3, ..., 5} {
\foreach \y in {3, ..., \x} {
    \fill (5- \x,  \y) circle (4pt);
}
}

\draw[thick, fill=white] (0,3) circle  (4pt);
\draw[thick, fill=white] (1,3) circle  (4pt);

\node at (-.5, 2.5) {\scriptsize 24};
\node at (.5, 2.5) {\scriptsize 21};
\node at (1.5, 2.5) {\scriptsize 17};
\node at (2.5, 2.5) {\scriptsize 16};

\node at (-.5, 3.5) {\scriptsize 26};
\node at (.5, 3.5) {\scriptsize 22};
\node at (1.5, 3.5) {\scriptsize 18};

\node at (-.5, 4.5) {\scriptsize 27};
\node at (.5, 4.5) {\scriptsize 25};

\node at (-.5, 5.5) {\scriptsize 28};

\end{scope}

\begin{scope}[shift={(26,.5)}]

\draw[gray] (0,4) -- (1,4) -- (1,5) -- (0,5) -- cycle;
\draw[violet, ultra thick] (0,4)  -- (0,5);

\foreach \x in {4, ..., 5} {
\foreach \y in {4, ..., \x} {
    \fill (5- \x,  \y) circle (4pt);
}
}

\draw[thick, fill=white] (0,4) circle  (4pt);

\node at (-.5, 3.5) {\scriptsize 26};
\node at (.5, 3.5) {\scriptsize 22};
\node at (1.5, 3.5) {\scriptsize 18};

\node at (-.5, 4.5) {\scriptsize 27};
\node at (.5, 4.5) {\scriptsize 25};

\node at (-.5, 5.5) {\scriptsize 28};

\end{scope}

\begin{scope}[shift={(0,-8)}]

\node at (-2,2) {\small (b)};

\begin{scope}[shift={(0,0)}]

\draw[gray] (0,0) -- (5,0) -- (5,1) -- (4,1) -- (4,2) -- (3,2) -- (3,3) -- (2,3) -- (2,4) -- (1,4) -- (1,5) -- (0,5) -- cycle;

\draw[red, ultra thick] (0.0,0) -- (.9,0) -- (.9,2.1)  -- (3,2.1);
\draw[red, ultra thick] (1,0.0) -- (1.85,0.0) -- (1.85,1.9) -- (2.9,1.9);
\draw[red, ultra thick] (2.0,0) -- (2.0,1) -- (3,1) -- (3,2);
\draw[red, ultra thick](3,0) -- (3.85,0) -- (3.85,1);
\draw[red, ultra thick] (4,0.0) -- (5,0.0);

\fill (0, 0) circle (2pt);

\foreach \x in {0,1, ..., 5} {
\foreach \y in {0, ..., \x} {
    \fill (5- \x,  \y) circle (4pt);
}
}

\draw[thick, fill=white] (0,0) circle  (4pt);
\draw[thick, fill=white] (1.0,0) circle  (4pt);
\draw[thick, fill=white] (2,0) circle  (4pt);
\draw[thick, fill=white]  (3,0) circle  (4pt);
\draw[thick, fill=white]  (4.0,0) circle  (4pt);

\node at (-.5, -.5) {\tiny $Q_{6,5}$};
\node at (.75, -.5) {\tiny $Q_{6,4}$};
\node at (2, -.5) {\tiny $Q_{6,3}$};
\node at (3.25, -.5) {\tiny $Q_{6,2}$};
\node at (4.5, -.5) {\tiny $Q_{6,1}$};

\end{scope}

\begin{scope}[shift={(8,.125)}]

\draw[gray] (0,1) -- (4,1) -- (4,2) -- (3,2) -- (3,3) -- (2,3) -- (2,4) -- (1,4) -- (1,5) -- (0,5) -- cycle;

\draw[orange, ultra thick] (0.0,1) -- (.9,1) -- (.9,4);
\draw[orange, ultra thick] (1.05,1) --  (1.05,2) -- (3,2);
\draw[orange, ultra thick] (2.0,1) -- (2,1.9) -- (2.95,1.9);
\draw[orange, ultra thick] (3,1) -- (4,1);

\foreach \x in {1, ..., 5} {
\foreach \y in {1, ..., \x} {
    \fill (5- \x,  \y) circle (4pt);
}
}

\draw[thick, fill=white] (0,1) circle  (4pt);
\draw[thick, fill=white] (1,1) circle  (4pt);
\draw[thick, fill=white] (2,1) circle  (4pt);
\draw[thick, fill=white]  (3,1) circle  (4pt);

\node at (-.5, .5) {\tiny $Q_{5,4}$};
\node at (.75, .5) {\tiny $Q_{5,3}$};
\node at (2, .5) {\tiny $Q_{5,2}$};
\node at (3.25, .5) {\tiny $Q_{5,1}$};

\end{scope}

\begin{scope}[shift={(15,.25)}]

\draw[gray] (0,2) -- (3,2) -- (3,3) -- (2,3) -- (2,4) -- (1,4) -- (1,5) -- (0,5) -- cycle;


\draw[green, ultra thick] (0.0,2) -- (0,4)  -- (1,4);
\draw[green, ultra thick] (1,2) -- (1,4);
\draw[green, ultra thick] (2.0,2) -- (2.95,2);

\foreach \x in {2, ..., 5} {
\foreach \y in {2, ..., \x} {
    \fill (5- \x,  \y) circle (4pt);
}
}

\draw[thick, fill=white] (0,2) circle  (4pt);
\draw[thick, fill=white] (1,2) circle  (4pt);
\draw[thick, fill=white] (2,2) circle  (4pt);

\node at (-.25, 1.5) {\tiny $Q_{4,3}$};
\node at (1, 1.5) {\tiny $Q_{4,2}$};
\node at (2.25, 1.5) {\tiny $Q_{4,1}$};

\end{scope}

\begin{scope}[shift={(21,.375)}]

\draw[gray] (0,3) -- (2,3) -- (2,4) -- (1,4) -- (1,5) -- (0,5) -- cycle;
\draw[blue, ultra thick] (0.0,3) -- (0,4)  -- (1,4);
\draw[blue, ultra thick] (1,3) -- (1,4);

\foreach \x in {3, ..., 5} {
\foreach \y in {3, ..., \x} {
    \fill (5- \x,  \y) circle (4pt);
}
}

\draw[thick, fill=white] (0,3) circle  (4pt);
\draw[thick, fill=white] (1,3) circle  (4pt);

\node at (-.25, 2.5) {\tiny $Q_{3,2}$};
\node at (1, 2.5) {\tiny $Q_{3,1}$};

\end{scope}

\begin{scope}[shift={(26,.5)}]

\draw[gray] (0,4) -- (1,4) -- (1,5) -- (0,5) -- cycle;
\draw[violet, ultra thick] (0,4)  -- (0,5);

\foreach \x in {4, ..., 5} {
\foreach \y in {4, ..., \x} {
    \fill (5- \x,  \y) circle (4pt);
}
}

\draw[thick, fill=white] (0,4) circle  (4pt);

\node at (0, 3.5) {\tiny $Q_{2,1}$};

\end{scope}

\end{scope}


\begin{scope}[shift={(2,-12)}, scale=.9]

\node at  (-4.5,0) {\small (c)};

\node at (0,0) {
\begin{ytableau}
0 \\
1 & 0 \\
1 & 0 & 1 \\
0 & 1 & 1 & 0  \\
0 & 0 & 1 & 1 & 0
\end{ytableau}
};

\node at (0, -4) {$A_5$};

\node at (-.35,2) {\scriptsize $Q_{6,1}$};
\node at (.65,1) {\scriptsize $Q_{6,2}$};
\node at (1.65,0) {\scriptsize $Q_{6,3}$};
\node at (2.65,-1) {\scriptsize $Q_{6,4}$};
\node at (3.65,-2) {\scriptsize $Q_{6,5}$};

\node at (9,-.5) {
\begin{ytableau}
0 \\ 
0 & 1 \\
0 & 0 & 1 \\
1 & 1 & 1 & 0 
\end{ytableau}
};

\node at (9, -4) {$A_4$};

\begin{scope}[shift={(8.5,0)}]
\node at (.65,1) {\scriptsize $Q_{5,1}$};
\node at (1.65,0) {\scriptsize $Q_{5,2}$};
\node at (2.65,-1) {\scriptsize $Q_{5,3}$};
\node at (3.65,-2) {\scriptsize $Q_{5,4}$};
\end{scope}

\node at (16,-1) {
\begin{ytableau}
0 \\ 
1 & 1 \\
0 & 1 & 1 \\
\end{ytableau}
};

\node at (16, -4) {$A_3$};

\begin{scope}[shift={(15,0)}]
\node at (1.65,0) {\scriptsize $Q_{4,1}$};
\node at (2.65,-1) {\scriptsize $Q_{4,2}$};
\node at (3.65,-2) {\scriptsize $Q_{4,3}$};
\end{scope}

\node at (22,-1.5) {
\begin{ytableau}
1 \\ 
0 & 1 \\
\end{ytableau}
};

\node at (22, -4) {$A_2$};

\begin{scope}[shift={(20.5,0)}]
\node at (2.65,-1) {\scriptsize $Q_{3,1}$};
\node at (3.65,-2) {\scriptsize $Q_{3,2}$};
\end{scope}

\node at (27,-2) {
\begin{ytableau}
1 \\ 
\end{ytableau}
};

\node at (27, -4) {$A_1$};

\begin{scope}[shift={(25,0)}]
\node at (3.65,-2) {\scriptsize $Q_{2,1}$};
\end{scope}

\end{scope}

\end{tikzpicture}
\end{center}

\caption{Mapping a SSB triangle of order $n=7$ to an ABH. (a) The NCLPs $\mathcal{Q}$ for the SSB from  Figure \ref{fig:ssb}. These NCLP are pairwise noncrossing as well. (b) The NCLP $\mathcal{Q} = \{ Q_{i,j}  : 1 \leq j <  i \leq 6 \}$ with their labels. (c) The ABT $A_{r-1}$ corresponds to subset $\mathcal{Q}_r = \{ Q_{r,j} : 1 \leq j < r \}$ for
$2 \leq  r \leq 6$.}
\label{fig:abt-hyper}
\end{figure}

\begin{proof}[Proof of Theorem \ref{thm:hypertriangle}]
Consider a strict-sense ballot for $n$ candidates with  SSB triangle $T$. For each $3 \leq r \leq n$, we will construct an ABT $A_{r-2}$ of size $r-2$ that describes how row $r$ of $T$ compares to the previous $r-1$ rows of $T$. 
We start by drawing a collection $\mathcal{Q} = \{ Q_{i,j} : 1 \leq j < i \leq n-1 \}$ of noncrossing lattice paths,  as shown in Figure \ref{fig:abt-hyper}. 
We construct these paths as follows.
 
\begin{enumerate}
\item Associate entry $T(r,j)$ with the northeast corner of its boundary square for $1 \leq j < r \leq n$. 
\item Label the corner of $T(r,j)$ by $(r-1, r-1-j)$. These labeled corners will be our lattice points.
\item Starting at corner $(r-1, r-1-j)$, draw the unique lattice path $Q_{r-1, r-1-j}$ to the main diagonal so that entries $T(s,k)$ larger (resp.~smaller) than entry $T(r,j)$ are northwest (resp.~southeast) of this lattice path.
\end{enumerate}

\begin{remark}
Note that the labeling of our lattice points is unconventional, as shown in Figure \ref{fig:labels-corners}. We have organized the points by rows and diagonals, rather than rows and columns. Furthermore, our label entries \emph{decrease} when we travel north or east. The important feature to note is that a north step is now $(-1,-1)$ and an east step is now $(0,-1)$. 
\end{remark}

\ytableausetup{nosmalltableaux}

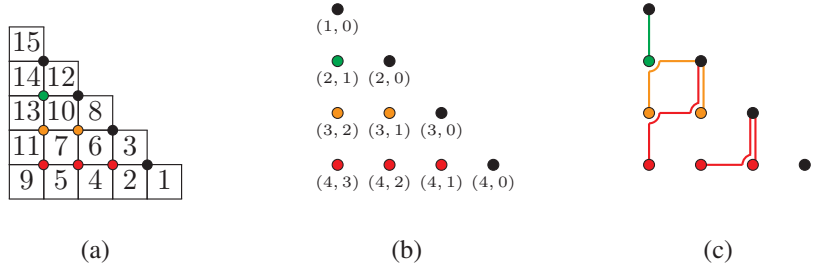
\begin{figure}
 
 \centering
 \begin{tikzpicture}[scale=.46]
 

 \node at (0,0) {
\begin{ytableau}
15 \\
14 & 12 \\
13 & 10 & 8 \\
11 & 7 & 6 & 3  \\
9 & 5 & 4 & 2 & 1
\end{ytableau}
};

\node at (0,-4) {\small (a)}; 
\node at (9,-4) {\small (b)}; 
\node at (18,-4) {\small (c)};  
 
\begin{scope}[shift={(-1.5,-2.5)}]
 
 \foreach \x in {2, ..., 5} {
\foreach \y in {2, ..., \x} {
    \fill (5- \x,  \y-1) circle (3pt);
}
}

\draw[fill] (0,4) circle (4pt);

\draw[fill=green] (0,3) circle (4pt);
\draw[fill] (1,3) circle (4pt);

\draw[fill=orange] (0,2) circle (4pt);
\draw[fill=orange] (1,2) circle (4pt);
\draw[fill] (2,2) circle (4pt);

\draw[fill=red] (0,1) circle (4pt);
\draw[fill=red] (1,1) circle (4pt);
\draw[fill=red] (2,1) circle (4pt);
\draw[fill] (3,1) circle (4pt);
\end{scope}

\begin{scope}[shift={(7,-3)}, scale = 1.5]

     \draw[fill] (0,4) circle (3pt);
     \node[below] at (0,4) {\tiny $(1,0)$};

     \draw[fill=green] (0,3) circle (3pt);
     \node[below] at (0,3) {\tiny $(2,1)$};     
     \draw[fill] (1,3) circle (3pt);
     \node[below] at (1,3) {\tiny $(2,0)$};

     \draw[fill=orange] (0,2) circle (3pt);
     \node[below] at (0,2) {\tiny $(3,2)$};     
     \draw[fill=orange] (1,2) circle (3pt);
     \node[below] at (1,2) {\tiny $(3,1)$};
    \draw[fill]  (2,2) circle (3pt);
     \node[below] at (2,2) {\tiny $(3,0)$};   
     
     \draw[fill=red] (0,1) circle (3pt);
     \node[below] at (0,1) {\tiny $(4,3)$};     
     \draw[fill=red] (1,1) circle (3pt);
     \node[below] at (1,1) {\tiny $(4,2)$};
     \draw[fill=red] (2,1) circle (3pt);
     \node[below] at (2,1) {\tiny $(4,1)$};        
     \draw[fill]  (3,1) circle (3pt);
     \node[below] at (3,1) {\tiny $(4,0)$};

\end{scope}

\begin{scope}[shift={(16,-3)}, scale = 1.5]

\draw[thick,red] (0,1) -- (0,1.8);
\draw[thick,red] (0.2,2) -- (0.8,2);
\draw[thick,red] (.95,2.2) -- (.95,3);

\draw[thick,red] (1,1) -- (1.8,1); 
\draw[thick,red] (1.95,1.2) -- (1.95,2); 
\draw[thick,red] (2.05,1) -- (2.05,2);  

\draw[thick,orange] (0,2) -- (0,2.8); 
\draw[thick,orange] (.2,3) -- (1,3); 
\draw[thick,orange] (1.05,2) -- (1.05,3);  

\draw[thick,green] (0,3) -- (0,4);  
 
     \draw[fill] (0,4) circle (3pt);

     \draw[fill=green] (0,3) circle (3pt);
     \draw[fill] (1,3) circle (3pt);

     \draw[fill=orange] (0,2) circle (3pt);
     \draw[fill=orange] (1,2) circle (3pt);
     \draw[fill] (2,2) circle (3pt);
     
     \draw[fill=red] (0,1) circle (3pt);
     \draw[fill=red] (1,1) circle (3pt);
     \draw[fill=red] (2,1) circle (3pt);
     \draw[fill] (3,1) circle (3pt);

 \begin{scope}[shift={(0,3)}]
  \draw[thick,orange] (0,-.2) arc[start angle=-90, end angle=0,radius=.2];
 \end{scope} 

 \begin{scope}[shift={(0,2)}]
  \draw[thick,red] (0,-.2) arc[start angle=-90, end angle=0,radius=.2];
 \end{scope}

 \begin{scope}[shift={(1,2)}]
  \draw[thick,red] (-.2,0) arc[start angle=180, end angle=100,radius=.2];
 \end{scope} 

  \begin{scope}[shift={(2,1)}]
  \draw[thick,red] (-.2,0) arc[start angle=180, end angle=100,radius=.2];
 \end{scope}
 
\end{scope} 
 
 \end{tikzpicture}
 
 \caption{(a) A SSB triangle of order 5. (b) Label the corner of SSB triangle entry $T(r,j)$ by $(r-1, r-1-j)$ for $1 \leq j < r \leq n$. A north step updates the label by $(-1,-1)$ and an east step updates the label by $(0,-1)$. (c) The NCLP of size 3 for this SSB triangle.}
 
 \label{fig:labels-corners}
 
 \end{figure}

The decreasing rows and columns of SSB triangle $T$ guarantee that the paths $\mathcal{Q}$ are pairwise noncrossing, as per the following definition.
 
 \begin{definition}
 \label{def:nclp-tri}
Let $(s,t)$ and $(u,v)$ be the labels for points in our triangular lattice, where $u \geq s$.
Let $Q_{s,t}$ and $P_{u,v}$ be lattice paths starting at these points. If path $P_{u,v}$ does not reach row $s$, then the paths are noncrossing. Otherwise, let $(s,w)$ be the label of the first point of $P_{u,v}$ in row $s$, and let $R_{s,w}$ be the subpath of $P_{u,v}$ that starts at this point. Then the paths $Q_{s,t}$ and $P_{u,v}$ are noncrossing when either (1) $w > t$ and $R_{s,w}$ is weakly northwest of $Q_{s,t}$, or (2) $w \leq t$ and $R_{s,w}$ is weakly southeast of $Q_{s,t}$. 
 \end{definition}

\begin{remark}
We find the following visualization helpful when confirming that  lattice paths $P,Q$ are noncrossing.
 When a lattice path $Q_{a,b}$ encounters a lattice point labeled $(c,d)$, rather than traveling through $(c,d)$, we \emph{circumvent} the point by traveling along a small circular arc. The direction of rotation depends upon the step that led to the point, see Figure \ref{fig;labels-corners}. If we stepped east to $(c,d)$, then we travel clockwise around $(c,d)$. Consequently, path $Q_{c,d}$  must remain southeast of $Q_{a,b}$. On the other hand, if we stepped north to $(c,d)$, then we travel counterclockwise  around $(c,d)$, and path $Q_{c,d}$ must remain northwest of $Q_{a,b}$.
 \end{remark}

 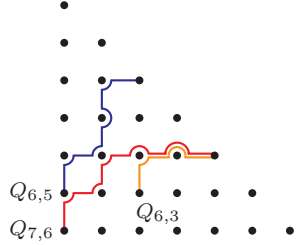
\begin{figure}[ht]
 
 \centering
 \begin{tikzpicture}[scale=.5]
 
   \begin{scope}[shift={(0,1)}, scale = .25]
  \draw[thick,red] (0,-1) arc[start angle=-90, end angle=0,radius=1];
 \end{scope}
 
 \begin{scope}[shift={(1,1)}, scale = .25]
  \draw[thick,red] (-1,0) arc[start angle=180, end angle=90,radius=1];
 \end{scope}
 
  \begin{scope}[shift={(1,2)}, scale = .25]
  \draw[thick,red] (0,-1) arc[start angle=-90, end angle=0,radius=1];
 \end{scope}
 
  \begin{scope}[shift={(2,2)}, scale = .25]
  \draw[thick,red] (-1,0) arc[start angle=180, end angle=10,radius=1];
 \end{scope}

    \begin{scope}[shift={(3,2.025)}, scale = .25]
  \draw[thick,red] (-1.27,0) arc[start angle=180, end angle=0,radius=1.27];
 \end{scope}

%
 
\draw[thick,red] (0,0) -- (0,0.775);
 \draw[thick,red] (0.225,1) -- (0.775,1);
 \draw[thick,red] (1,1.225) -- (1,1.775);
 \draw[thick,red] (1.225,2) -- (1.775,2);
 
   \draw[thick,red] (2.22,2.05) -- (2.69,2.05) -- (2.69, 2.1);
 \draw[thick,red] (3.30, 2.1) -- (3.31,2.05) -- (4,2.05);

 \begin{scope}[shift={(0,2)}, scale = .25]
  \draw[thick,blue] (0,-1) arc[start angle=-90, end angle=0,radius=1];
 \end{scope}

 \begin{scope}[shift={(1,2)}, scale = .25]
  \draw[thick,blue] (-1,0) arc[start angle=180, end angle=90,radius=1];
 \end{scope}
 
 \begin{scope}[shift={(1,3)}, scale = .25]
  \draw[thick,blue] (0,-1) arc[start angle=-90, end angle=90,radius=1];
 \end{scope} 
 
  \begin{scope}[shift={(1,4)}, scale = .25]
  \draw[thick,blue] (0,-1) arc[start angle=-90, end angle=0,radius=1];
 \end{scope}

 \draw[thick,blue] (0,1) -- (0,1.775);
 \draw[thick,blue] (0.225,2) -- (0.775,2);
 \draw[thick,blue] (1,2.225) -- (1,2.775);
  \draw[thick,blue] (1,3.225) -- (1,3.775);
 \draw[thick,blue] (1.225,4) -- (2,4);

\begin{scope}[shift={(2,2)}, scale = .25]
  \draw[thick,orange] (0,-1) arc[start angle=-90, end angle=-10,radius=1];
 \end{scope}

    \begin{scope}[shift={(3,2.0)}, scale = .25]
  \draw[thick,orange] (-0.9,0) arc[start angle=180, end angle=0,radius=.9];
 \end{scope}
 
  \draw[thick,orange] (2,1) -- (2,1.775);
  \draw[thick,orange] (2.22,1.95) -- (2.78,1.95) -- (2.78,2);
 \draw[thick,orange] (3.22,2) -- (3.22,1.95) -- (4,1.95);

 \foreach \x in {0,1,2,3,4,5,6}
 {
 \foreach \y in {0,...,\x}
 {
 \fill (6-\x, \y) circle (3pt);
 }
 }
 
 \node[left] at (0,0) {\scriptsize $Q_{7,6}$};
  \node[left] at (0,1) {\scriptsize $Q_{6,5}$};
 \node[below] at (2.5,1) {\scriptsize $Q_{6,3}$};  
 
 \end{tikzpicture}
 
 \caption{noncrossing lattice paths starting in different rows. Path $Q_{6,5}$ is weakly northwest of $Q_{7,6}$ and path $Q_{6,3}$ is weakly southeast of $Q_{7,6}$. The noncrossing condition can be visualized by drawing paths that circumnavigate intermediate lattice points. East steps move clockwise and north steps move counterclockwise around the intermediate points. }
 \label{fig;labels-corners}
 
 \end{figure}

We are now ready to create our approval ballot hypertriangle. For $3 \leq r \leq n$,
we use the  NCLP for row $r$ of $T$ to define ABT $A_{r-2} \in \mathcal{A}_{r-2}$ in the usual way (see the proof of Proposition \ref{prop:abt-noncrossing}). That is, row $i$ of $A_{r-2}$ encodes path $Q_{r-1,i}$ when this row is read \emph{right-to-left}: the ones and zeros encode north steps and east steps, respectively. Figure \ref{fig:abt-hyper}(c) shows an example of the resulting ABTs.

 We now have a list $(A_{n-2}, A_{n-3}, \ldots, A_{1})$ where $A_k \in \mathcal{A}_k$. Since \emph{every} pair of lattice paths $Q_{s,t}, Q_{s',t'}$ is noncrossing, 
 our next task is to encode this pairwise noncrossing property as an algebraic condition on our list of ABTs.  We will guarantee that $Q_{s,t}$ does not cross any other path by adding (at most) two noncrossing constraints for each north step of $Q_{s,t}$.

 Recall that path $Q_{s,t}$ originates at  corner  $(s,t)$ where $s > t \geq 1$, and that the entries on the labels of the corners \emph{decrease} as we traverse the path. In particular, a north step is $(-1,-1)$ and an east step is $(0,-1)$.
Suppose that path $Q_{s,t}$ steps northward from $(s'+1,t'+1)$ to $(s',t')$ as shown in Figure \ref{fig:north-step}(a). Consider the paths $\mathcal{Q}_{s'} = \{ Q_{s', s'-1}, \ldots, Q_{s',1} \}$. The remaining $t'$ steps of path $Q_{s,t}$ impact the paths $\mathcal{Q}_{s'}$. More precisely, (a) the paths $Q_{s',s'-1},  \ldots ,Q_{s',t'}$ must remain weakly northwest of the last $t'$ steps of $Q_{s,t}$; and (b) the paths $Q_{s',t'-1},  \ldots , Q_{s',1}$ must remain weakly southeast of the last $t'$ steps of $Q_{s,t}$. 
 The paths of $\mathcal{Q}_{s'}$ are pairwise noncrossing, and this is encoded in $A_{s'}$. Therefore, it is sufficient to ensure that  $Q_{s',t'}$ is northeast of $Q_{s,t}$ and that $Q_{s',t'-1}$ is southeast of $Q_{s,t}$, see Figure \ref{fig:north-step}(b).

\begin{figure}

\begin{center}

\begin{tikzpicture}[scale=0.5]

\tikzset{decoration=snake}

\begin{scope}

\draw (0,0) -- (8,0) -- (0,8) -- cycle;

\fill[fill=gray!45] (2,3) -- (5,3) -- (3.5,4.5)   decorate{--(2,3)} -- (5, 3) ;

\fill[fill=gray!15]  (0,3) -- (0,8) -- (3.5,4.5)  decorate{--(2,3)};

\node[right] at (4.5,4.25) {\scriptsize $(s',t'-1)$};
\node[left] at (-0.5,2.8) {\scriptsize $(s'+1,t'+1)$};

\node[left] at (-0.5,4.5) {\scriptsize $(s',t')$};

\draw[-latex] (-0.5, 2.8) -- (1.9,2.6);
\draw[-latex] (4.5, 4.15) -- (2.6,3.1);
\draw[-latex] (-0.5, 4.5) -- (1.8,3.1);

\node at (2.15,1.5) {\scriptsize $Q_{s,t}$};

\foreach \x in {0,.5,...,4.5} {
	\draw[fill] (\x,3) circle (2pt);
}

\draw[very thick] (0.5,1) decorate{--(2,2.5)} -- (2,3);
\draw[very thick] (3.5,4.5) decorate{--(2,3)};

\draw[fill] (0.5,1) circle (2pt);
\node[left] at (-0.5,0.5) {\scriptsize $(s,t)$};
\draw[-latex] (-0.5, 0.5) -- (0.4,0.9);

\draw[fill] (2,3) circle (2.5pt);
\draw[fill] (2,2.5) circle (2.5pt);

\node at (4, -1) {\small (a)};

\end{scope}

\begin{scope}[shift={(14,0)}]

\draw (0,0) -- (8,0) -- (0,8) -- cycle;

\fill[fill=gray!45] (2,3) -- (5,3) -- (3.5,4.5)   decorate{--(2,3)} -- (5, 3) ;

\fill[fill=gray!15]  (0,3) -- (0,8) -- (3.5,4.5)  decorate{--(2,3)};

\node at (2.15,1.5) {\scriptsize $Q_{s,t}$};

\foreach \x in {0,.5,...,4.5} {
	\draw[fill] (\x,3) circle (2pt);
}

\draw (0.5,1) decorate{--(2,2.5)} -- (2,3);
\draw (3.5,4.5) decorate{--(2,3)};
\draw[fill] (0.5,1) circle (2pt);

\draw[fill] (2,3) circle (2.5pt);
\draw[fill] (2,2.5) circle (2.5pt);

\draw[very thick] (4,4)   decorate{--(2.5,3)};

\draw[very thick]  (2,3)   decorate{--(2.5,5.5)};

\draw[-latex] (5.5, 3.5) -- (3.75,3.5);
\draw[-latex] (-1, 4.5) -- (1.75,4);

\node[left] at (-0.5,5) {\scriptsize $Q_{s',t'}$};

\node[right] at (5.5,3.5) {\scriptsize $Q_{s',t'-1}$};

\node at (4, -1) {\small (b)};

\end{scope}

\end{tikzpicture}

\end{center}

\caption{(a) The path $Q_{s,t}$ starts at corner $(s,t)$ and has a north step from corner $(s'+1,t'+1)$ to corner $(s',t')$. (b) The path $Q_{s',t'}$ must be weakly northwest of $Q_{s,t}$. The path $Q_{s',t'-1}$ must be weakly southeast of $Q_{s,t}$. The remaining paths $Q_{s',j}$ starting from corners in row $s'$ will not cross $Q_{s,t}$ since they do not cross either $Q_{s',t'}$ or $Q_{s',t'-1}$.}

\label{fig:north-step}
\end{figure}
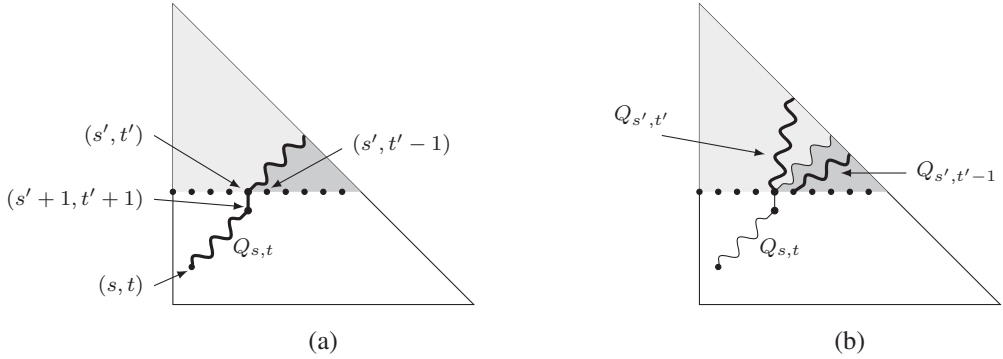

Recall that path $Q_{s,t}$ is encoded in row $t$ of $A_s$:  we create the path by reading this row \emph{in reverse}.  Next, observe that once we reach corner $(s',t')$ there are exactly $t'$ remaining steps in path $Q_{s,t}$.  These \emph{last} $t'$ steps of $Q_{s,t}$ are encoded by $A_s(t, [1:t'])$. 
Meanwhile, the path $Q_{s',t'} \in \mathcal{Q}_{s'}$ is encoded by row $t'$ of $A_{s'}$. Path $Q_{s',t'}$  must be northwest of the last $t'$ steps of $Q_{s,t}$, which corresponds to the algebraic constraint
$$
A_{s'}(t', [1:t']) \succeq A_{s}(t,[1:t']).
$$
 Similarly, the path $Q_{s',t'-1} \in \mathcal{Q}_{s'}$ must be southeast of last $t'$ steps of $Q_{s,t}$, which corresponds to
$$
A_{s'}(t'-1, [1:t'-1]) \prec A_{s}(t,[1:t']).
$$
All that remains is to observe that the cases $u=1$ and $u=2$ are exceptional. Having $A_s(t,1)=1$ means that the final step of $Q_{s,t}$ is a north step to point $(s',0)$ on the diagonal. This does not impact any of the paths in row $s'$, so no additional constraints are needed. Meanwhile, having $A_s(t,2)=1$ means that $t'=1$. There are no paths in $\mathcal{Q}_{s'}$ that are southeast of $Q_{s,t}$, so we do no need the second constraint in this case.
\end{proof}

\section{Conclusion}

\label{sec:end}

We have introduced approval ballot sequences into the ensemble of ballot problems. This family generalizes many known ballot problems, including ones counted by the Catalan, Schr\"{o}der, involution and switchboard numbers.
Algebraically, we represent these sequences with approval ballot triangles (ABTs). Geometrically, we visualize them as collections of noncrossing lattice paths (NCLPs). The latter vantage point reveals that ABTs are in bijection to totally symmetric  self-complementary plane partitions (TSSCPPs).

Using ABTs, we established a direct relationship between TSSCPPs and strict-sense ballots (SSBs).
The key was to introduce the notion of a hypertriangle: a list of triangular arrays 
in which sequential triangles adhere to a specified compatibility condition. We showed how to represent an SSB as an approval ballot hypertriangle, which is a list of compatible approval ballot triangles. 
Interestingly, this SSB decomposition parallels the structure of its constituent triangular arrays: each hypertriangle is made up of compatible TSSCPP triangles. Meanwhile, each ABT is made up of compatible rows of lattice paths. 
These are compelling examples of how the careful aggregation of  combinatorial families produces other important combinatorial families.
 This leads to a natural question: is there a known combinatorial family that can be constructed from a list of ``compatible SSBs?''

\bibliography{refs} 

\end{document}